\documentclass[reqno]{amsart}
\usepackage{amsmath, amsthm, amssymb, amstext}

\usepackage{hyperref,xcolor}
\hypersetup{
 pdfborder={0 0 0},
 colorlinks,
}

\usepackage{enumitem}
\setlength{\parindent}{1.2em}

\newtheorem{theorem}{Theorem}
\newtheorem{remark}[theorem]{Remark}

\newtheorem{proposition}[theorem]{Proposition}
\newtheorem{corollary}[theorem]{Corollary}



              %
              %
              %
              %
            %
          %
          %
\DeclareMathOperator*{\ints}{int}         %
\DeclareMathOperator*{\essinf}{ess ~inf}         %
\DeclareMathOperator*{\ww}{w}         %
\DeclareMathOperator*{\Ss}{S}         %

\newcommand{\N}{\mathbb{N}}

\newcommand{\R}{\mathbb{R}}

\newcommand{\Lp}[1]{L^{#1}(\Omega)}

\newcommand{\Wp}[1]{W^{1,#1}(\Omega)}

\newcommand{\Wpzero}[1]{W^{1,#1}_0(\Omega)}

\newcommand{\lan}{\langle}
\newcommand{\ran}{\rangle}
\newcommand{\eps}{\varepsilon}
\newcommand{\ph}{\varphi}

\newcommand{\into}{\int_{\Omega}}
\newcommand{\weak}{\overset{\ww}{\to}}

\newcommand{\Linf}{L^{\infty}(\Omega)}
\newcommand{\close}{\overline{\Omega}}
\newcommand{\interior}{\ints \left(C^1_0(\overline{\Omega})_+\right)}

\newcommand{\cprime}{$'$}

\renewcommand{\l}{\left}
\renewcommand{\r}{\right}

\numberwithin{theorem}{section}
\numberwithin{equation}{section}


\title[Singular Dirichlet $(p,q)$-equations]{Singular Dirichlet $(p,q)$-equations}

\author[N.\,S.\,Papageorgiou]{Nikolaos S.\,Papageorgiou}
\address[N.\,S.\,Papageorgiou]{National Technical University, Department of Mathematics, Zografou Campus, Athens 15780, Greece}
\email{npapg@math.ntua.gr}

\author[P.\,Winkert]{Patrick Winkert}
\address[P.\,Winkert]{Technische Universit\"{a}t Berlin, Institut f\"{u}r Mathematik, Stra\ss e des 17.\,Juni 136, 10623 Berlin, Germany}
\email{winkert@math.tu-berlin.de}

\subjclass[2010]{35J20, 35J75, 35J92}
\keywords{Positive cone, nonlinear regularity, truncations and comparisons, minimal positive solutions, nonlinear maximum principle}

\begin{document}

\begin{abstract}
    We consider a nonlinear Dirichlet problem driven by the $(p,q)$-Laplacian and with a reaction having the combined effects of a singular term and of a parametric $(p-1)$-superlinear perturbation. We prove a bifurcation-type result describing the changes in the set of positive solutions as the parameter $\lambda>0$ varies. Moreover, we prove the existence of a minimal positive solution $u^*_\lambda$ and study the monotonicity and continuity properties of the map $\lambda \to u^*_\lambda$.
\end{abstract}

\maketitle

\section{Introduction}

In a recent paper, the authors \cite{Papageorgiou-Winkert-2019} studied the following singular parametric $p$-Laplacian Dirichlet problem
\begin{equation*}
    \begin{aligned}
	-\Delta_p u   & =u^{-\eta} +\lambda f(x,u)\quad && \text{in } \Omega,\\
	u & = 0  &&\text{on } \partial \Omega,\\
	u>0, \quad \lambda&>0, \quad 0<\eta<1, \quad 1<p. &&
    \end{aligned}
\end{equation*}
They proved a result describing the dependence of the set of positive solutions as the parameter $\lambda>0$ varies, assuming that $f(x,\cdot)$ is $(p-1)$-superlinear.

In the present paper, we consider a singular parametric Dirichlet problem driven by the $(p,q)$-Laplacian, that is, the sum of a $p$-Laplacian and of a $q$-Laplacian with $1<q<p$. To be more precise, the problem under consideration is the following
\begin{equation}\tag{P$_\lambda$}\label{problem}
    \begin{aligned}
	-\Delta_p u-\Delta_q u   & =u^{-\eta} +\lambda f(x,u)\quad && \text{in } \Omega,\\
	u & = 0  &&\text{on } \partial \Omega,\\
	u>0, \quad \lambda&>0, \quad 0<\eta<1, \quad 1<q<p,&&
    \end{aligned}
\end{equation}
where $\Omega \subseteq \R^N$ is a bounded domain with a $C^2$-boundary $\partial\Omega$.
In this problem, the differential operator is not homogeneous and so many of the techniques used in Papageorgiou-Winkert \cite{Papageorgiou-Winkert-2019} are not applicable here. More precisely, in the proof of Proposition 3.1 in \cite{Papageorgiou-Winkert-2019}, the homogeneity of the $p$-Laplacian is crucial in the argument. It provides naturally an upper solution $\overline{u}$ which is an appropriate multiple of the unique solution $e \in \interior$ of problem (3.2) in \cite{Papageorgiou-Winkert-2019} (see also the argument in (3.7)). In our setting, this is no longer possible since the differential operator, so the $(p,q)$-Laplacian, is not homogeneous. This makes our proof here of the fact that $\mathcal{L} \neq \emptyset$ (existence of admissible parameters, see Proposition 3.1) more involved and requires some preparation which involves Propositions \ref{prop_2} and \ref{prop_3}. Moreover, the proof that the critical parameter $\lambda^*>0$ is finite differs for the same reason and here is more involved and requires the use of a different strong comparison principle. In \cite{Papageorgiou-Winkert-2019} (see Proposition 3.6) this is done easily since we can use the spectrum of $(-\Delta_p,\Wpzero{p})$ and in particular the principal eigenvalue $\hat{\lambda}_1>0$ thanks to the homogeneity of the differential operator (see (3.25) in \cite{Papageorgiou-Winkert-2019}). This reasoning fails in our setting and leads to a different geometry near zero (compare hypothesis H(iv) in \cite{Papageorgiou-Winkert-2019} with hypothesis H(iv) in this paper). Furthermore, we now need to employ a different comparison argument based on a recent strong comparison principle due to Papageorgiou-R\u{a}dulescu-Repov\v{s} \cite{Papageorgiou-Radulescu-Repovs-2020}. In addition, the proof of Proposition 3.7 in \cite{Papageorgiou-Winkert-2019} cannot be extended to our problem (see the part from (3.42) and below). The presence of the $q$-Laplacian leads to difficulties. For this reason, our superlinearity condition (see hypothesis H(iii)) differs from the one used in \cite{Papageorgiou-Winkert-2019}. However, we stress that both go beyond the classical Ambrosetti-Rabinowitz condition.

For the parametric perturbation of the singular term, $\lambda f(\cdot,\cdot)$ with $f\colon\Omega\times\R\to\R$, we assume that $f$ is a Carath\'{e}odory function, that is, $x\mapsto f(x,s)$ is measurable for all $s\in \R$ and $s\mapsto f(x,s)$ is continuous for almost all (a.\,a.) $x\in \Omega$. Moreover we assume that $f(x,\cdot)$ exhibits $(p-1)$-superlinear growth as $s\to +\infty$ but it need not satisfy the usual Ambrosetti-Rabinowitz condition (the AR-condition for short) in such cases. Applying variational tools from critical point theory along with suitable truncation and comparison techniques, we prove a bifurcation-type result as in \cite{Papageorgiou-Winkert-2019}, which describes in a precise way the dependence of the set of positive solutions as the parameter $\lambda>0$ changes.

In this direction we mention the recent works of Papageorgiou-R\u{a}dulescu-Repov\v{s} \cite{Papageorgiou-Radulescu-Repovs-2020} and Papageorgiou-Vetro-Vetro \cite{Papageorgiou-Vetro-Vetro-2019} which also deal with nonlinear singular parametric Dirichlet problems. In theses works the parameter multiplies the singular term. Indeed, in Papageorgiou-R\u{a}dulescu-Repov\v{s} \cite{Papageorgiou-Radulescu-Repovs-2020} the equation is driven by a nonhomogeneous differential operator and in the reaction we have the competing effects of a parametric singular term and of a $(p-1)$-superlinear perturbation. In Papageorgiou-Vetro-Vetro \cite{Papageorgiou-Vetro-Vetro-2019} the equation is driven by the $(p,2)$-Laplacian and in the reaction we have the competing effects of a parametric singular term and of a $(p-1)$-linear, resonant perturbation. The work of Papageorgiou-Vetro-Vetro \cite{Papageorgiou-Vetro-Vetro-2019} was continued by Bai-Motreanu-Zeng \cite{Bai-Motreanu-Zeng-2020} where the authors examine the continuity properties with respect to the parameter of the solution multifunction.

Boundary value problems monitored by a combination of differential operators of different nature (such as $(p,q)$-equations), arise in many mathematical processes. We refer, for example, to the works of Bahrouni-R\u{a}dulescu-Repov\v{s} \cite{Bahrouni-Radulescu-Repovs-2019} (transonic flows), Benci-D'Avenia-Fortunato-Pisani \cite{Benci-DAvenia-Fortunato-Pisani-2000} (quantum physics), Cherfils-Il\cprime yasov \cite{Cherfils-Ilyasov-2005} (reaction diffusion systems) and Zhikov \cite{Zhikov-1986} (elasticity theory). We also mention the survey paper of R\u{a}dulescu \cite{Radulescu-2019} on anistropic $(p,q)$-equations.

\section{Preliminaries and Hypotheses} 

The main spaces which we will be using in the study of problem \eqref{problem} are the Sobolev space $\Wpzero{p}$ and the Banach space $C^1_0(\close)$. By $\|\cdot\|$ we denote the norm of the Sobolev space $\Wpzero{p}$ and because of the Poincar\'{e} inequality, we have
\begin{align*}
    \|u\|=\|\nabla u\|_p \quad\text{for all }u\in\Wpzero{p},
\end{align*}
where $\|\cdot\|_p$ denotes norm in $\Lp{p}$ and also in $L^p(\Omega;\R^N)$. From the context it will be clear which one is used. 

The Banach space
\begin{align*}
   C^1_0(\overline{\Omega})= \left\{u \in C^1(\overline{\Omega})\,:\, u\big|_{\partial \Omega}=0 \right\}
\end{align*}
is an ordered Banach space with positive cone
\begin{align*}
   C^1_0(\overline{\Omega})_+=\left\{u \in C^1_0(\overline{\Omega})\,:\, u(x) \geq 0 \text{ for all } x \in \overline{\Omega}\right\}.
\end{align*}
This cone has a nonempty interior given by
\begin{align*}
   \ints \left(C^1_0(\overline{\Omega})_+\right)=\left\{u \in C^1_0(\overline{\Omega})_+: u(x)>0 \text{ for all } x \in \Omega \text{, } \frac{\partial u}{\partial n}(x)<0 \text{ for all } x \in \partial \Omega \right\},
\end{align*}
where $n(\cdot)$ stands for the outward unit normal on $\partial \Omega$.

For every $r\in (1,\infty)$, let $A_r\colon\Wpzero{r}\to W^{-1,r'}(\Omega)=\Wpzero{r}^*$ with $\frac{1}{r}+\frac{1}{r'}=1$ be the nonlinear map defined by
\begin{align}\label{p-Laplace}
    \langle A_r(u), h\rangle=\into |\nabla u|^{r-2}\nabla u \cdot \nabla h\,dx \quad\text{for all }u,h\in\Wpzero{r}.
\end{align}

From Gasi{\'n}ski-Papageorgiou \cite[Problem 2.192, p.\,279]{Gasinski-Papageorgiou-2016} we have the following properties of $A_r$.

\begin{proposition}\label{prop_1}
    The map $A_r\colon\Wpzero{r}\to W^{-1,r'}(\Omega)$ defined in \eqref{p-Laplace} is bounded, that is, it maps bounded sets to bounded sets, continuous, strictly monotone, hence maximal monotone and it is of type $(\Ss)_+$, that is,
    \begin{align*}
	u_n \weak u \text{ in }\Wpzero{r}\quad\text{and}\quad \limsup_{n\to\infty} \langle A_r(u_n),u_n-u\rangle \leq 0,
    \end{align*}
    imply $u_n\to u$ in $\Wpzero{r}$.
\end{proposition}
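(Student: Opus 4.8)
The plan is to establish the five asserted properties in turn; the first four are essentially routine, and the $(\Ss)_+$ property carries the real content. \emph{Boundedness} is immediate from H\"older's inequality: for $u,h\in\Wpzero{r}$,
\begin{align*}
    |\langle A_r(u),h\rangle|\leq\into|\nabla u|^{r-1}|\nabla h|\,dx\leq\|\nabla u\|_r^{r-1}\|\nabla h\|_r,
\end{align*}
so $\|A_r(u)\|_*\leq\|\nabla u\|_r^{r-1}$ and bounded sets are mapped to bounded sets. For \emph{continuity}, I would take $u_n\to u$ in $\Wpzero{r}$, hence $\nabla u_n\to\nabla u$ in $\Lpvalued{r}$. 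Writing $a(\xi)=|\xi|^{r-2}\xi$, the growth $|a(\xi)|=|\xi|^{r-1}$ gives $|a(\xi)|^{r'}=|\xi|^r$ since $(r-1)r'=r$, so the associated Nemytskii operator is continuous from $\Lpvalued{r}$ into $\Lpvalued{r'}$; thus $a(\nabla u_n)\to a(\nabla u)$ in $\Lpvalued{r'}$, and the estimate $\|A_r(u_n)-A_r(u)\|_*\leq\|a(\nabla u_n)-a(\nabla u)\|_{r'}$ yields $A_r(u_n)\to A_r(u)$.

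For \emph{strict monotonicity} I would invoke the classical elementary inequalities for $a$, namely $(a(\xi)-a(\eta))\cdot(\xi-\eta)\geq c_r|\xi-\eta|^r$ when $r\geq2$ and $(a(\xi)-a(\eta))\cdot(\xi-\eta)\geq c_r|\xi-\eta|^2(|\xi|+|\eta|)^{r-2}$ when $1<r<2$ (for $(\xi,\eta)\neq(0,0)$), with a constant $c_r>0$. In both regimes the integrand of $\langle A_r(u)-A_r(v),u-v\rangle$ is nonnegative and vanishes only where $\nabla u=\nabla v$, so the integral is strictly positive whenever $u\neq v$. Since $A_r$ is everywhere defined, monotone and demicontinuous (indeed continuous), the Minty--Browder theorem then delivers \emph{maximal monotonicity} without further work.

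The main obstacle is the $(\Ss)_+$ property, and here the degenerate range $1<r<2$ is the delicate point. Given $u_n\weak u$ in $\Wpzero{r}$ with $\lims\langle A_r(u_n),u_n-u\rangle\leq0$, I would first use $u_n-u\weak0$ to get $\langle A_r(u),u_n-u\rangle\to0$, hence
\begin{align*}
    \lims\langle A_r(u_n)-A_r(u),u_n-u\rangle\leq0.
\end{align*}
Monotonicity makes every term nonnegative, so $\langle A_r(u_n)-A_r(u),u_n-u\rangle\to0$. For $r\geq2$ the first inequality above gives $c_r\|\nabla u_n-\nabla u\|_r^r\to0$ and we conclude at once. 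For $1<r<2$ I would start from the pointwise bound $|\xi-\eta|^r\leq c_r^{-r/2}\big[(a(\xi)-a(\eta))\cdot(\xi-\eta)\big]^{r/2}(|\xi|+|\eta|)^{(2-r)r/2}$ and apply H\"older's inequality with the conjugate exponents $\frac{2}{r}$ and $\frac{2}{2-r}$ to obtain
\begin{align*}
    \|\nabla u_n-\nabla u\|_r^r\leq c\,\big[\langle A_r(u_n)-A_r(u),u_n-u\rangle\big]^{r/2}\big(\|\nabla u_n\|_r+\|\nabla u\|_r\big)^{(2-r)r/2}.
\end{align*}
The bracketed factor tends to $0$ while the last factor stays bounded, since a weakly convergent sequence is norm-bounded; therefore $\nabla u_n\to\nabla u$ in $\Lpvalued{r}$, i.e. $u_n\to u$ in $\Wpzero{r}$, which finishes the proof.
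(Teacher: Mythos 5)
Your proof is correct, but it is worth noting that the paper does not prove this proposition at all: it simply cites Gasi\'nski--Papageorgiou, \emph{Exercises in Analysis, Part 2}, Problem 2.192, and uses the result as a known fact. Your argument supplies the standard self-contained proof that such a citation stands in for: H\"older's inequality with the exponent identity $(r-1)r'=r$ for boundedness, Krasnoselskii's theorem on Nemytskii operators for continuity, the classical vector inequalities
\begin{align*}
    \l(|\xi|^{r-2}\xi-|\eta|^{r-2}\eta\r)\cdot(\xi-\eta)\geq
    \begin{cases}
        c_r|\xi-\eta|^r & \text{if } r\geq 2,\\
        c_r|\xi-\eta|^2\l(|\xi|+|\eta|\r)^{r-2} & \text{if } 1<r<2,
    \end{cases}
\end{align*}
for strict monotonicity, and the Minty--Browder theorem (monotone, everywhere defined, continuous implies maximal monotone) for maximal monotonicity. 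The real content is the $(\Ss)_+$ property, and you handle the one genuinely delicate point correctly: the reduction $\lan A_r(u_n)-A_r(u),u_n-u\ran\to 0$ via weak convergence plus monotonicity, followed in the degenerate range $1<r<2$ by the H\"older splitting with conjugate exponents $\frac{2}{r}$ and $\frac{2}{2-r}$, where the second factor is controlled by the norm-boundedness of the weakly convergent sequence. All exponents check out (the last factor carries the power $r(2-r)/2$ as you state), and the case distinction is exactly where a careless proof would break down. In short: a correct, complete proof of a statement the paper outsources to the literature; your route is the same one the cited exercise book intends, so there is no methodological divergence to speak of, only a difference in the level of detail provided.
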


For $s \in \R$, we set $s^{\pm}=\max\{\pm s,0\}$ and for $u \in W^{1,p}_0(\Omega)$ we define $u^{\pm}(\cdot)=u(\cdot)^{\pm}$. It is well known that
\begin{align*}
    u^{\pm} \in W^{1,p}_0(\Omega), \quad |u|=u^++u^-, \quad u=u^+-u^-.
\end{align*}

For $u,v\in\Wpzero{p}$ with $u(x)\leq v(x)$ for a.\,a.\,$x\in\Omega$ we define
\begin{align*}
    [u,v]&=\big\{h\in\Wpzero{p}: u(x)\leq h(x)\leq v(x)\text{ for a.\,a.\,}x\in\Omega\big\},\\
    [u)&=\big\{h\in\Wpzero{p}: u(x)\leq h(x)\text{ for a.\,a.\,}x\in\Omega\big\}.
\end{align*}

Given a set $S\subseteq \Wp{p}$ we say that it is ``downward directed'', if for any given $u_1, u_2\in S$ we can find $u \in S$ such that $u\leq u_1$ and $u\leq u_2$.

If $h_1,h_2\colon\Omega\to\R$ are two measurable functions, then we write $h_1\prec h_2$ if and only if for every compact $K\subseteq\Omega$ we have $0<c_K\leq h_2(x)-h_1(x)$ for a.\,a.\,$x\in K$.

If $X$ is a Banach space and $\ph\in C^1(X,\R)$, then we define
\begin{align*}
    K_\ph=\left\{u\in X \, : \, \ph'(u)=0\right\}
\end{align*}
being the critical set of $\ph$. Furthermore, we say that $\ph$ satisfies the Cerami condition (C-condition for short), if every sequence $\{u_n\}_{n \geq 1} \subseteq X$ such that $\{\ph(u_n)\}_{n \geq 1}\subseteq \R$ is bounded and such that $\left(1+\|u_n\|_X\right)\ph'(u_n) \to 0$ in $X^*$ as $n \to \infty$, admits a strongly convergent subsequence.

Our Hypotheses on the perturbation $f\colon\Omega\times\R\to\R$ are the following:
\begin{enumerate}[leftmargin=1.2cm]
    \item[H:]
	$f\colon\Omega \times \R\to \R$ is a Carath\'{e}odory function such that $f(x,0)=0$ for a.\,a.\,$x\in\Omega$ and
	\begin{enumerate}[itemsep=0.2cm, topsep=0.2cm]
	    \item[(i)]
		\begin{align*}
		    f(x,s)\leq a(x) \left(1+s^{r-1}\right)
		\end{align*}
		for a.a.\,$x\in\Omega$, for all $s\geq 0$, with $a\in \Linf$ and $p<r<p^*$, where $p^*$ denotes the critical Sobolev exponent with respect to $p$ given by
		\begin{align*}
		    p^*=
		    \begin{cases}
			\frac{Np}{N-p} & \text{if }p<N,\\
			+\infty & \text{if } N \leq p;
		    \end{cases}
		\end{align*}
	    \item[(ii)]
		if $F(x,s)=\int^s_0f(x,t)dt$, then
		\begin{align*}
		    \lim_{s\to +\infty} \frac{F(x,s)}{s^{p}}=+\infty\quad\text{uniformly for a.\,a.\,}x\in\Omega;
		\end{align*}
	    \item[(iii)]
		there exists $\tau \in \left((r-p)\max\left\{\frac{N}{p},1\right\},p^*\right)$ with $\tau>q$ such that
		\begin{align*}
		    0 < c_0\leq \liminf_{s\to +\infty} \frac{f(x,s)s-pF(x,s)}{s^\tau} \quad\text{uniformly for a.\,a.\,}x\in\Omega;
		\end{align*}
	    \item[(iv)]
		\begin{align*}
		    \lim_{s\to 0^+} \frac{f(x,s)}{s^{q-1}}=0\quad\text{uniformly for a.\,a.\,}x\in\Omega
		\end{align*}
		and there exists $\tau \in (q,p)$ such that
		\begin{align*}
			\liminf_{s\to 0^+}\, \frac{f(x,s)}{s^{\tau-1}}\geq \hat{\eta}>0\quad\text{uniformly for a.\,a.\,}x\in\Omega;
		\end{align*}
	    \item[(v)]
		for every $\hat{s}>0$ we have
		\begin{align*}
		    f(x,s) \geq m_{\hat{s}}>0
		\end{align*}
		for a.a.\,$x\in\Omega$ and for all $s\geq \hat{s}$ and for every $\rho>0$ there exists $\hat{\xi}_\rho>0$ such that the function
		\begin{align*}
		    s\to f(x,s)+\hat{\xi}_\rho s^{p-1}
		\end{align*}
		is nondecreasing on $[0,\rho]$ for a.a.\,$x\in\Omega$.
	\end{enumerate}
\end{enumerate}

\begin{remark}
    Since we are looking for positive solutions and the hypotheses above concern the positive semiaxis $\R_+=[0,+\infty)$, without any loss generality, we may assume that
    \begin{align}\label{1}
	f(x,s)=0\quad\text{for a.a.\,}x\in \Omega\text{ and for all }s\leq 0.
    \end{align}
    Hypotheses H(ii), H(iii) imply that
    \begin{align*}
	\lim_{s\to+\infty} \frac{f(x,s)}{s^{p-1}}=+\infty\quad\text{uniformly for a.a.\,}x\in\Omega.
    \end{align*}
    Hence, the perturbation $f(x,\cdot)$ is $(p-1)$-superlinear. In the literature, superlinear equations are usually treated by using the AR-condition. In our case, taking \eqref{1} into account, we refer to a unilateral version of this condition which says that there exist $M>0$ and $\mu>p$ such that
    \begin{align}
	0&<\mu F(x,s) \leq f(x,s)s\quad\text{for a.\,a.\,}x\in \Omega\text{ and for all }s\geq M,\label{2a}\\
	0&<\essinf_\Omega F(\cdot,M).\label{2b}
    \end{align}
    If we integrate \eqref{2a} and use \eqref{2b}, we obtain the weaker condition
    \begin{align*}
	c_1 s^\mu \leq F(x,s)\quad\text{for a.\,a.\,}x\in\Omega,\text{ for all }s\geq M \text{ and for some }c_1>0.
    \end{align*}
    This implies, due to \eqref{2a}, that
    \begin{align*}
	c_1 s^{\mu-1} \leq f(x,s)\quad\text{for a.\,a.\,}x\in\Omega\text{ and for all }s\geq M.
    \end{align*}
    
    We see that the AR-condition is dictating that $f(x,\cdot)$ eventually has $(\mu-1)$-polynomial growth. Here, instead of the AR-condition, see \eqref{2a}, \eqref{2b}, we employ a less restrictive behavior near $+\infty$, see hypothesis H(iii). This way we are able to incorporate in our framework superlinear nonlinearities with ``slower'' growth near $+\infty$. For example, consider the function 
    $f\colon\R\to\R$ (for the sake of simplicity we drop the $x$-dependence) defined by
    \begin{align*}
	f(x)=
	\begin{cases}
	    s^{\mu-1} &\text{if }0 \leq s \leq 1,\\
	    s^{p-1}\ln(x)+s^{\tilde{s}-1} &\text{if } 1<s
	\end{cases}
    \end{align*}
    with $q<\mu<p$ and $\tilde{s}<p$, see \eqref{1}. This function satisfies hypotheses H, but fails to satisfy the AR-condition.
\end{remark}

By a solution of \eqref{problem} we mean a function $u\in\Wpzero{p}$, $u\geq 0$, $u\neq 0$, such that $uh\in \Lp{1}$ for all $h\in \Wpzero{p}$ and
\begin{align*}
    \l\lan A_p(u),h\r\ran +\l\lan A_q(u),h\r\ran =
    \into u^{-\eta}h\,dx+\lambda \into f(x,u)h\,dx\quad \text{for all } h\in\Wpzero{p}.
\end{align*}
The energy functional $\ph_\lambda\colon\Wpzero{p}\to\R$ of the problem \eqref{problem} is given by
\begin{align*}
    \ph_\lambda(u)=\frac{1}{p}\|\nabla u\|_p^p+\frac{1}{q}\|\nabla u\|_q^q -\frac{1}{1-\eta}\into \left(u^+\right)^{1-\eta}\,dx -\lambda \into F\left(x,u^+\right)\,dx
\end{align*}
for all $h\in\Wpzero{p}$.

We can find solutions of \eqref{problem} among the critical points of $\ph_\lambda$. The problem that we face is that because of the third term, so the singular one, the energy functional $\ph_\lambda$ is not $C^1$. So, we cannot apply directly the minimax theorems of the critical point theory on $\ph_\lambda$. Solving related auxiliary Dirichlet problems and then using suitable truncation and comparison techniques, we are able to overcome this difficulty, isolate the singularity and deal with $C^1$-functionals on which the classical critical point theory can be used.

To this end, first we consider the following purely singular Dirichlet problem
\begin{equation}\label{3}
    \begin{aligned}
	-\Delta_p u-\Delta_q u   & =u^{-\eta}\quad && \text{in } \Omega,\\
	u & = 0  &&\text{on } \partial \Omega,\\
	u>0,\quad 0&<\eta<1, && 1<q<p.
    \end{aligned}
\end{equation}
From Proposition 10 of Papageorgiou-R\u{a}dulescu-Repov\v{s} \cite{Papageorgiou-Radulescu-Repovs-2020} we have the following result concerning problem \eqref{3}.

\begin{proposition}\label{prop_2}
    Problem \eqref{3} admits a unique solution $\underline{u}\in\interior$.
\end{proposition}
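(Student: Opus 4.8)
The plan is to remove the singularity by a bounded approximation, solve the regularized problems variationally, and then pass to the limit while controlling the singular term by a Hardy-type estimate. For $\eps\in(0,1]$ I would consider the regularized problem
\begin{equation*}
    -\pl u-\Delta_q u=(u^++\eps)^{-\eta}\quad\text{in }\Om,\qquad u=0\quad\text{on }\rand,
\end{equation*}
whose reaction is now bounded by $\eps^{-\eta}$. The associated energy functional
\begin{equation*}
    \psi_\eps(u)=\frac1p\|\nabla u\|_p^p+\frac1q\|\nabla u\|_q^q-\frac{1}{1-\eta}\into\l[(u^++\eps)^{1-\eta}-\eps^{1-\eta}\r]dx
\end{equation*}
is of class $C^1$ on $\Wpzero{p}$, coercive (the singular primitive now grows only like $(u^+)^{1-\eta}$, which is sublinear and hence dominated by $\|\nabla u\|_p^p$ through the Poincar\'e inequality) and sequentially weakly lower semicontinuous. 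Therefore $\psi_\eps$ has a global minimizer $u_\eps\in\Wpzero{p}$. Testing $\psi_\eps'(u_\eps)=0$ with $-u_\eps^-$ gives $u_\eps\geq0$, and since the reaction $(u_\eps+\eps)^{-\eta}$ is strictly positive, nonlinear regularity theory together with the nonlinear strong maximum principle and the boundary point lemma yields $u_\eps\in\interior$.

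Next I would establish the monotonicity and uniform bounds needed for the limit. If $\eps_1<\eps_2$, then $(s+\eps_1)^{-\eta}>(s+\eps_2)^{-\eta}$, so the weak comparison principle for the maximal monotone operator $A_p+A_q$ gives $u_{\eps_2}\leq u_{\eps_1}$; thus the family $\{u_\eps\}$ increases as $\eps\downarrow0$ and stays above the fixed function $u_1\in\interior$. Testing the regularized equation with $u_\eps$ and using $(u_\eps+\eps)^{-\eta}u_\eps\leq(u_\eps+1)^{1-\eta}$ yields a bound on $\|\nabla u_\eps\|_p$ that is uniform in $\eps$, because the right-hand side grows sublinearly in $u_\eps$. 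Hence $\{u_\eps\}$ is bounded in $\Wpzero{p}$, and by monotone convergence $u_\eps\uparrow\uu$ both pointwise and weakly in $\Wpzero{p}$ for some $\uu\geq u_1$.

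To identify the limit as a solution I would pass to the limit in the weak formulation. The crucial point is the singular term: the uniform lower bound $u_\eps\geq u_1\geq c\,d(\cdot,\rand)$ gives the domination $(u_\eps+\eps)^{-\eta}|h|\leq c'\,d(\cdot,\rand)^{-\eta}|h|$, and since $0<\eta<1$ the function $d(\cdot,\rand)^{-\eta}|h|$ is integrable for every $h\in\Wpzero{p}$ by Hardy's inequality; dominated convergence then gives $\into(u_\eps+\eps)^{-\eta}h\,dx\to\into\uu^{-\eta}h\,dx$. For the principal part one shows $\lims\langle A_p(u_\eps)+A_q(u_\eps),u_\eps-\uu\rangle\le0$ and invokes the $(\Ss)_+$ property from Proposition \ref{prop_1} to obtain $u_\eps\to\uu$ strongly, whence $A_r(u_\eps)\to A_r(\uu)$. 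Thus $\uu$ solves \eqref{3}, and a final regularity/comparison step (again via nonlinear regularity plus the strong maximum principle, using $\uu\geq u_1$) places $\uu\in\interior$. The hard part throughout is the boundary analysis: because the reaction blows up like $d(\cdot,\rand)^{-\eta}$, one must show that the solutions behave like $d(\cdot,\rand)$ up to the boundary (using $\eta<1$ and suitable barriers) in order to obtain the uniform $C^{1,\alpha}$ regularity and the strict sign of the normal derivative that membership in $\interior$ requires.

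Finally, uniqueness is the easy part and follows from monotonicity. If $\uu_1,\uu_2\in\interior$ both solve \eqref{3}, subtracting the two weak formulations tested with $h=\uu_1-\uu_2$ gives
\begin{equation*}
    \langle A_p(\uu_1)-A_p(\uu_2),\uu_1-\uu_2\rangle+\langle A_q(\uu_1)-A_q(\uu_2),\uu_1-\uu_2\rangle=\into\l(\uu_1^{-\eta}-\uu_2^{-\eta}\r)(\uu_1-\uu_2)\,dx.
\end{equation*}
Here the test function is admissible because $\uu_i\in\interior$ are comparable to $d(\cdot,\rand)$ and $\uu_1-\uu_2\in C^1_0(\close)$ vanishes on $\rand$, so the integrand is bounded by $c\,d(\cdot,\rand)^{1-\eta}\in L^1(\Om)$. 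The left-hand side is nonnegative by the monotonicity of $A_p$ and $A_q$, while the right-hand side is nonpositive since $s\mapsto s^{-\eta}$ is strictly decreasing; both must therefore vanish, forcing $\uu_1=\uu_2$.
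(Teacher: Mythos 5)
Your proposal is, in substance, a correct and standard construction; the most important thing to realize, however, is that the paper does not prove Proposition \ref{prop_2} at all: it quotes it verbatim from Proposition 10 of Papageorgiou--R\u{a}dulescu--Repov\v{s} \cite{Papageorgiou-Radulescu-Repovs-2020}. So your regularization scheme (solve $-\Delta_p u-\Delta_q u=(u^++\eps)^{-\eta}$ by the direct method, obtain monotonicity in $\eps$ and the lower bound $u_\eps\geq u_1$ by comparison, control the singular term via $u_1\geq c\,d(\cdot,\rand)$ and Hardy's inequality, pass to the limit with the $(\Ss)_+$ property of Proposition \ref{prop_1}, and get uniqueness from monotonicity of $A_p+A_q$ against the decreasing map $s\mapsto s^{-\eta}$) is not an alternative to the paper's argument but a self-contained substitute for the citation, and it is essentially how results of this type are proved in the cited literature. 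What the paper buys by citing is that it never has to perform the boundary analysis; what your route buys is independence from that reference, at the cost of doing that analysis yourself.

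Two points need repair, and the second is exactly where the cited result's difficulty is hidden. First, $\psi_\eps$ as you wrote it is not $C^1$: the map $s\mapsto \frac{1}{1-\eta}\l[(s^++\eps)^{1-\eta}-\eps^{1-\eta}\r]$ has a corner at $s=0$ (right derivative $\eps^{-\eta}$, left derivative $0$), so you cannot conclude $\psi_\eps'(u_\eps)=0$. Use instead the genuine primitive $s\mapsto\int_0^s(t^++\eps)^{-\eta}\,dt$, whose integrand is continuous and bounded; the functional is then $C^1$ and your test with $h=-u_\eps^-$ goes through verbatim. Second, your final claim that the limit $\uu$ belongs to $\interior$ is asserted rather than proved; you rightly call it ``the hard part,'' but naming barriers is not the same as producing them. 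Since $\uu^{-\eta}$ blows up like $d(\cdot,\rand)^{-\eta}$ near $\rand$, Lieberman's regularity theory \cite{Lieberman-1991} does not apply off the shelf; one needs either a barrier construction giving $c_1 d\leq \uu\leq c_2 d$ or the singular regularity theorem of Giacomoni--Schindler--Tak\'a\v{c} \cite[Theorem B.1]{Giacomoni-Schindler-Takac-2007} (the same tool the paper invokes in Proposition \ref{prop_3}) to reach $C^{1,\alpha}(\close)$ regularity, after which the Pucci--Serrin maximum principle and boundary point lemma yield membership in $\interior$. Until that step is carried out or explicitly cited, your argument delivers a positive weak solution of \eqref{3} but not the full statement. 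The uniqueness paragraph, by contrast, is complete and correct as written, provided you note that vanishing of $\l\lan A_p(\uu_1)-A_p(\uu_2),\uu_1-\uu_2\r\ran$ forces $\uu_1=\uu_2$ by the \emph{strict} monotonicity recorded in Proposition \ref{prop_1}.
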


From the Lemma in Lazer-McKenna \cite{Lazer-McKenna-1991} we know that
\begin{align*}
	\underline{u}^{-\eta} \in \Lp{1}.
\end{align*}
Moreover, from Hardy's inequality we have
\begin{align*}
	\underline{u}^{-\eta} h \in \Lp{1}\quad\text{and}\quad\into \left|\underline{u}^{-\eta}h\right|\,dx \leq \hat{c} \|h\|
\end{align*}
for all $h \in \Wpzero{p}$. It follows that $\underline{u}^{-\eta}+1 \in W^{-1,p'}(\Omega)=\Wpzero{p}^*$.

So, we can consider a second auxiliary Dirichlet problem
\begin{equation}\label{5}
    \begin{aligned}
	-\Delta_p u-\Delta_q u   & =\underline{u}^{-\eta}+1\quad && \text{in } \Omega,\\
	u & = 0  &&\text{on } \partial \Omega,\\
	0&<\eta<1,\quad  1<q<p.
    \end{aligned}
\end{equation}

We show that \eqref{5} has a unique solution.

\begin{proposition}\label{prop_3}
    Problem \eqref{5} admits a unique solution $\overline{u}\in\interior$.
\end{proposition}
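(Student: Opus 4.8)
The plan is to realize $\uo$ as the unique solution of the monotone operator equation attached to \eqref{5} and then to upgrade its regularity by comparison with $\uu$. Since $\uu^{-\eta}+1\in\Wpzero{p}^*$ (as recorded above via Hardy's inequality), a function $\uo\in\Wpzero{p}$ solves \eqref{5} if and only if $V(\uo)=\uu^{-\eta}+1$ in $\Wpzero{p}^*$, where $V:=A_p+A_q$. By Proposition \ref{prop_1} (together with the monotonicity of $A_q$ restricted to $\Wpzero{p}$) the map $V\colon\Wpzero{p}\to\Wpzero{p}^*$ is bounded, continuous, strictly monotone and of type $(\Ss)_+$; it is also coercive, because $\lan V(u),u\ran=\|\nabla u\|_p^p+\|\nabla u\|_q^q\geq\|u\|^p$, so $\lan V(u),u\ran/\|u\|\to+\infty$. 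Being coercive, continuous and of type $(\Ss)_+$, the map $V$ is surjective, so there exists $\uo\in\Wpzero{p}$ with $V(\uo)=\uu^{-\eta}+1$. Uniqueness is immediate from strict monotonicity: if $V(u_1)=V(u_2)$ then $0=\lan V(u_1)-V(u_2),u_1-u_2\ran$ forces $u_1=u_2$. (Equivalently, $\uo$ is the unique minimizer of the strictly convex, coercive, weakly lower semicontinuous functional $u\mapsto\frac1p\|\nabla u\|_p^p+\frac1q\|\nabla u\|_q^q-\lan\uu^{-\eta}+1,u\ran$.)

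Next I would check the sign. Testing $V(\uo)=\uu^{-\eta}+1$ with $-\uo^-\in\Wpzero{p}$ gives $\|\nabla\uo^-\|_p^p+\|\nabla\uo^-\|_q^q=\into(\uu^{-\eta}+1)(-\uo^-)\,dx\leq 0$, since the datum is nonnegative while $-\uo^-\leq 0$. Hence $\uo^-=0$, that is $\uo\geq 0$, and $\uo\neq 0$ because the datum $\uu^{-\eta}+1$ is nonzero.

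Then I would establish the comparison $\uu\leq\uo$. Subtracting the weak formulations of \eqref{3} and \eqref{5}, the singular terms cancel and we obtain $\lan V(\uo)-V(\uu),h\ran=\into h\,dx$ for all $h\in\Wpzero{p}$. Testing with $h=(\uu-\uo)^+\geq 0$, the left-hand side equals $-\into_{\{\uu>\uo\}}\big(|\nabla\uo|^{p-2}\nabla\uo-|\nabla\uu|^{p-2}\nabla\uu\big)\cdot\nabla(\uo-\uu)\,dx$ plus the analogous $q$-term, which is $\leq 0$ by the monotonicity of the $p$- and $q$-Laplacians, while the right-hand side $\into(\uu-\uo)^+\,dx$ is $\geq 0$. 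Both sides therefore vanish, forcing $(\uu-\uo)^+=0$, i.e. $\uu\leq\uo$.

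Finally, the regularity. Since $\uu\in\interior$, on every compact $K\subseteq\Omega$ one has $\uu\geq c_K>0$, so the datum $\uu^{-\eta}+1$ is bounded on $K$, and interior nonlinear regularity for the $(p,q)$-Laplacian yields $\uo\in C^{1,\alpha}_{\mathrm{loc}}(\Omega)$. The behavior near $\rand$ is governed by comparison: the boundary singularity of the datum is exactly the one occurring in \eqref{3}, whose solution $\uu$ lies in $\interior$ by Proposition \ref{prop_2}, so following the boundary regularity argument of Papageorgiou-\Radulescu-\Repovs behind Proposition \ref{prop_2} we get $\uo\in C^1_0(\close)$. Once $\uo\in C^1_0(\close)_+$ is known, the inequality $\uu\leq\uo$ with $\uu\in\interior$ forces interior membership: $\uo>0$ in $\Omega$, and along $\rand$ one has $\frac{\partial\uo}{\partial n}\leq\frac{\partial\uu}{\partial n}<0$, whence $\uo\in\interior$. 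The main obstacle is precisely this last step, namely obtaining $C^1$-regularity up to $\rand$ in the presence of the boundary singularity of $\uu^{-\eta}$; here the argument must invoke the boundary estimates underlying Proposition \ref{prop_2} rather than a plain bounded right-hand side.
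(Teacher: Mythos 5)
Your existence and uniqueness argument is exactly the paper's: the paper also introduces $L=A_p+A_q$ on $\Wpzero{p}$, notes it is continuous, strictly monotone (hence maximal monotone) and coercive, solves $L(\uo)=\uu^{-\eta}+1$ by surjectivity, and gets uniqueness from strict monotonicity; your variant via the bounded-continuous-coercive-$(\Ss)_+$ (pseudomonotone) surjectivity theorem, and your sign check with $h=-\uo^-$, are both correct. Where you genuinely diverge is at the end. The paper obtains $\uo\in C^1_0(\close)_+\setminus\{0\}$ in one stroke by citing Theorem B.1 of Giacomoni-Schindler-Tak\'a\v{c} \cite{Giacomoni-Schindler-Takac-2007}, whose hypotheses hold here because $\uu\in\interior$ gives $\uu^{-\eta}+1\leq c\left(1+d(x)^{-\eta}\right)$ with $0<\eta<1$ and $d=\dist(\cdot,\rand)$; it then concludes $\uo\in\interior$ from the Pucci-Serrin nonlinear maximum principle applied to $\Delta_p\uo+\Delta_q\uo\leq0$. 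You instead prove $\uu\leq\uo$ by testing the difference of the two weak formulations with $(\uu-\uo)^+$ (correct; the paper establishes this same inequality only later, in Proposition 3.1, quoting the weak comparison principle), and then read off both interior positivity and the Hopf condition $\frac{\partial\uo}{\partial n}\leq\frac{\partial\uu}{\partial n}<0$ on $\rand$ from $\uo-\uu\geq0$ and $\uu\in\interior$. That is a legitimate and arguably more elementary route to the interior-cone membership, bypassing the maximum principle entirely -- but it only works once $\uo\in C^1_0(\close)$ is known. The one soft spot is the step you yourself flag: ``following the boundary regularity argument behind Proposition \ref{prop_2}'' is a pointer, not a proof, and plain Lieberman regularity cannot close it since the datum blows up at $\rand$. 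The precise missing tool is exactly the theorem the paper cites: Theorem B.1 of \cite{Giacomoni-Schindler-Takac-2007} is formulated for right-hand sides dominated by $d^{-\eta}$ with $0<\eta<1$, which is the present situation. Insert that citation in place of your gesture and your proof is complete.
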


\begin{proof}
    Consider the operator $L\colon\Wpzero{p}\to W^{-1,p'}(\Omega)$ with $\frac{1}{p}+\frac{1}{p'}=1$ defined by
    \begin{align*}
		L(u)=A_p(u)+A_q(u) \quad\text{for all }u\in\Wpzero{p}.
    \end{align*}
    This operator is continuous, strictly monotone, hence maximal monotone and coercive. Since $\underline{u}^{-\eta}+1\in W^{-1,p'(\Omega)}$ (see the comments after Proposition \ref{prop_2}), we can find $\overline{u} \in \Wpzero{p}, \overline{u}\neq 0$ such that
    \begin{align*}
		L\left(\overline{u}\right)=\underline{u}^{-\eta}+1.
    \end{align*}
    The strict monotonicity of $L$ implies the uniqueness of $\overline{u}$ while Theorem B.1 of Giacomoni-Schindler-Tak\'{a}\v{c} \cite{Giacomoni-Schindler-Takac-2007} implies that $\overline{u} \in C^1_0(\close)_+\setminus\{0\}$. Furthermore, we have 
    \begin{align*}
		\Delta_p \overline{u}(x)+\Delta_q \overline{u}(x) \leq 0\quad\text{for a.\,a.\,}x\in\Omega.
    \end{align*}
	Hence, from the nonlinear maximum principle, see Pucci-Serrin \cite[pp.\,111 and 120]{Pucci-Serrin-2007}, we conclude that $\overline{u}\in \interior$.
\end{proof}

\section{Positive solutions}

We introduce the following two sets
\begin{align*}
    \mathcal{L}&=\left\{\lambda>0: \text{problem \eqref{problem} has a positive solution}\right\},\\
    \mathcal{S}_\lambda&=\left\{u: u\text{ is a positive solution of problem \eqref{problem}}\right\}.
\end{align*}

\begin{proposition}
    If hypotheses H hold, then $\mathcal{L}\neq \emptyset$.
\end{proposition}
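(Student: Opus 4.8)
The goal is to show that the set $\mathcal{L}$ of admissible parameters is nonempty, i.e., that problem \eqref{problem} has a positive solution for at least one (in fact all sufficiently small) $\lambda>0$. My plan is to use the two auxiliary solutions $\uu$ and $\uo$ (from Propositions \ref{prop_2} and \ref{prop_3}) to isolate the singularity, and then to obtain the solution of the full problem as a critical point of a truncated, $C^1$ energy functional.

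First I would fix $\lambda>0$ and use $\uu\in\interior$ (the solution of the purely singular problem \eqref{3}) together with $\uo\in\interior$ (the solution of \eqref{5}) as a candidate ordered pair for a sub-/supersolution scheme. Since $\uu$ solves $-\pl\uu-\Delta_q\uu=\uu^{-\eta}$ and $f\geq 0$ on $\R_+$, the function $\uu$ should serve as a subsolution. For the supersolution I would exploit that $\uo$ satisfies $-\pl\uo-\Delta_q\uo=\uu^{-\eta}+1$; by the growth condition H(i) and the behavior near zero in H(iv), for $\lambda$ small enough the reaction $u^{-\eta}+\lambda f(x,u)$ should be dominated by $\uu^{-\eta}+1$ on the order interval $[\uu,\uo]$, making $\uo$ a supersolution. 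The key point, which will need the weak comparison principle, is that $\uu\le\uo$, so the order interval $[\uu,\uo]$ is nonempty and well-defined.

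The central device is to introduce a truncation of the reaction at the levels $\uu$ and $\uo$, replacing $u^{-\eta}$ by $\uu(x)^{-\eta}$ below $\uu$ (and similarly capping above $\uo$), so that the singular term becomes a bounded, $L^\infty$-type perturbation and the corresponding truncated energy functional is $C^1$ and coercive on $\Wpzero{p}$. I would then minimize this functional: coercivity and weak lower semicontinuity (using the $(\Ss)_+$ property of $A_r$ from Proposition \ref{prop_1} and the weak compactness of the Sobolev embedding, together with $p<r<p^*$ in H(i)) yield a global minimizer $u_0$. The routine steps are verifying $u_0\neq 0$ and that $u_0$ solves the truncated problem; the substantive step is showing via comparison arguments (testing the weak formulation against $(\uu-u_0)^+$ and $(u_0-\uo)^+$) that $\uu\le u_0\le\uo$, which forces the truncations to be inactive and makes $u_0$ a genuine solution of \eqref{problem}.

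The main obstacle, as the authors themselves emphasize in the introduction, is exactly the construction of the supersolution: because the $(p,q)$-Laplacian is \emph{not} homogeneous, one cannot simply scale a fixed function to absorb the reaction as in the pure $p$-Laplacian case. This is why the preparatory Propositions \ref{prop_2} and \ref{prop_3} are needed — the supersolution $\uo$ must be produced by solving the genuinely nonhomogeneous auxiliary problem \eqref{5} rather than by a scaling trick. I would therefore spend most of the effort making precise, using H(i) and H(iv) and the positivity $\uu,\uo\in\interior$, the inequality $\lambda f(x,u)\le 1$ (hence $u^{-\eta}+\lambda f(x,u)\le\uu^{-\eta}+1$) on the relevant order interval for all small $\lambda$, thereby confirming that $\uo$ is a supersolution and that $\mathcal{L}\supseteq(0,\lambda_0]$ for some $\lambda_0>0$.
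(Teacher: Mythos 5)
Your proposal is correct and follows essentially the same route as the paper: truncate the reaction at the ordered pair $\underline{u}\leq\overline{u}$ (ordering via the weak comparison principle), minimize the resulting coercive $C^1$ functional, and force the minimizer into $[\underline{u},\overline{u}]$ by testing with $(\underline{u}-u_\lambda)^+$ and $(u_\lambda-\overline{u})^+$, with $\lambda_0$ chosen so that $\lambda_0 f(x,\overline{u}(x))\leq 1$. The only cosmetic difference is that the domination of the truncated reaction by $\underline{u}^{-\eta}+1$ needs only H(i) (giving $f(\cdot,\overline{u}(\cdot))\in L^\infty(\Omega)$) together with $\underline{u}\leq\overline{u}$; hypothesis H(iv) plays no role here.
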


\begin{proof}
    Let $\overline{u}\in\interior$ be as in Proposition \ref{prop_3}. Hypothesis H(i) implies that $f(\cdot,\overline{u}(\cdot))\in\Linf$. So, we can find $\lambda_0>0$ such that
    \begin{align}\label{8}
	0 \leq \lambda_0 f\l(x,\overline{u}(x)\r) \leq 1\quad\text{for a.\,a.\,}x\in\Omega.
    \end{align}
    From the weak comparison principle (see Pucci-Serrin \cite[Theorem 3.4.1, p.\,61]{Pucci-Serrin-2007}), we have $\underline{u} \leq \overline{u}$. So, for given $\lambda \in (0,\lambda_0]$, we can define the following truncation of the reaction of problem \eqref{problem}
    \begin{align}\label{9}
	g_\lambda(x,s)=
	\begin{cases}
	    \underline{u}(x)^{-\eta}+\lambda f(x,\underline{u}(x)) &\text{if }s<\underline{u}(x),\\
	    s^{-\eta}+\lambda f(x,s) &\text{if }\underline{u}(x) \leq s \leq \overline{u}(x),\\
	    \overline{u}(x)^{-\eta}+\lambda f(x,\overline{u}(x)) &\text{if }\overline{u}(x)<s.
	\end{cases}
    \end{align}
    This is a Carath\'{e}odory function. We set $G_\lambda(x,s)=\int_0^s g_\lambda(x,t)\,dt$ and consider the $C^1$-functional $\psi_\lambda\colon\Wpzero{p}\to\R$ defined by
    \begin{align*}
	\psi_\lambda(u)= \frac{1}{p} \|\nabla u\|_p^p+\frac{1}{q}\|\nabla u\|_q^q -\into G_\lambda(x,u)\,dx\quad\text{for all }u \in\Wpzero{p},
    \end{align*}
    see also Papageorgiou-Smyrlis \cite[Proposition 3]{Papageorgiou-Smyrlis-2015}. From \eqref{9} we see that $\psi_\lambda$ is coercive. Also, using the Sobolev embedding theorem, we see that $\psi_\lambda$ is sequentially weakly lower semicontinuous. So, by the Weierstra\ss{}-Tonelli theorem, we can find $u_\lambda\in\Wpzero{p}$ such that
    \begin{align*}
	\psi_\lambda(u_\lambda)=\min \l[\psi_\lambda(u)\,:\,u\in\Wpzero{p}\r].
    \end{align*}
    This means, in particular, that $\psi_\lambda'(u_\lambda)=0$, which gives
    \begin{align}\label{10}
	\l\lan A_p(u_\lambda),h\r\ran+\l\lan A_q(u_\lambda),h\r\ran=\into g_\lambda(x,u_\lambda)h\,dx\quad\text{for all }h\in\Wpzero{p}.
    \end{align}
    First, we choose $h=\l(\underline{u}-u_\lambda\r)^+\in\Wpzero{p}$ in \eqref{10}. This yields, because of \eqref{9}, $f \geq 0$ and Proposition \ref{prop_2} that
    \begin{align*}
	&\l\lan A_p(u_\lambda),\l(\underline{u}-u_\lambda\r)^+\r\ran+\l\lan A_q(u_\lambda),\l(\underline{u}-u_\lambda\r)^+\r\ran\\
	&=\into \l[\underline{u}^{-\eta}+\lambda f(x,\underline{u})\r] \l(\underline{u}-u_\lambda\r)^+\,dx\\
	& \geq \into \underline{u}^{-\eta} \l(\underline{u}-u_\lambda\r)^+\,dx\\
	& =\l\lan A_p(\underline{u}),\l(\underline{u}-u_\lambda\r)^+\r\ran+\l\lan A_q(\underline{u}),\l(\underline{u}-u_\lambda\r)^+\r\ran.
    \end{align*}
    This implies
    \begin{align*}
    &\int_{\{\underline{u}>u_\lambda\}} \l(|\nabla \underline{u}|^{p-2} \nabla \underline{u} - |\nabla u_\lambda|^{p-2}\nabla u_\lambda\r) \cdot \l(\nabla \underline{u}-\nabla u_\lambda\r)\,dx\\
    &+\int_{\{\underline{u}>u_\lambda\}} \l(|\nabla \underline{u}|^{q-2} \nabla \underline{u} - |\nabla u_\lambda|^{q-2}\nabla u_\lambda\r) \cdot \l(\nabla \underline{u}-\nabla u_\lambda\r)\,dx\\
    & \leq 0,
    \end{align*}
    which means $|\{\underline{u}>u_\lambda\}|_N=0$ with $|\cdot|_N$ being the Lebesgue measure of $\R^N$.
    Hence,
    \begin{align}\label{11}
	\underline{u} \leq u_\lambda.
    \end{align}
    Next, we choose $h=\l(u_\lambda-\overline{u}\r)^+\in\Wpzero{p}$ in \eqref{10}. Applying \eqref{9}, \eqref{11}, \eqref{8} and recall that $0 <\lambda\leq\lambda_0$, we obtain
    \begin{align*}
	&\l\lan A_p(u_\lambda),\l(u_\lambda-\overline{u}\r)^+\r\ran+\l\lan A_q(u_\lambda),\l(u_\lambda-\overline{u}\r)^+\r\ran\\
	&=\into \l[\overline{u}^{-\eta}+\lambda f(x,\overline{u})\r] \l(u_\lambda-\overline{u}\r)^+\,dx\\
	& \leq \into \l[ \underline{u}^{-\eta}+1\r]\l(u_\lambda-\overline{u}\r)^+\,dx\\
	&=\l\lan A_p(\overline{u}),\l(u_\lambda-\overline{u}\r)^+\r\ran+\l\lan A_q(\overline{u}),\l(u_\lambda-\overline{u}\r)^+\r\ran\\
    \end{align*}
    From this we see that
    \begin{align*}
    &\int_{\{u_\lambda>\overline{u}\}} \l(|\nabla u_\lambda|^{p-2} \nabla u_\lambda - |\nabla \overline{u}|^{p-2}\nabla \overline{u}\r) \cdot \l(\nabla u_\lambda -\nabla \overline{u}\r)\,dx\\
    &+\int_{\{u_\lambda>\overline{u}\}} \l(|\nabla u_\lambda|^{q-2} \nabla u_\lambda - |\nabla \overline{u}|^{q-2}\nabla \overline{u}\r) \cdot \l(\nabla u_\lambda -\nabla \overline{u}\r)\,dx\\
    & \leq 0
    \end{align*}
	and so $|\{u_\lambda>\overline{u}\}|_N=0$. Thus, $u_\lambda\leq \overline{u}$. So, we have proved that
    \begin{align}\label{12}
	u_\lambda \in [\underline{u},\overline{u}].
    \end{align}
    Then, \eqref{12}, \eqref{9} and \eqref{10} imply that $u_\lambda \in \mathcal{S}_\lambda$ and so $(0,\lambda_0]\subseteq \mathcal{L}\neq \emptyset$.
\end{proof}

\begin{proposition}\label{prop_5}
    If hypotheses H hold and $\lambda\in\mathcal{L}$, then $\underline{u}\leq u$ for all $u \in\mathcal{S}_\lambda$.
\end{proposition}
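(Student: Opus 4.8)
The plan is to run the standard weak comparison argument, exploiting the monotonicity of the operators $A_p,A_q$ together with the fact that $s\mapsto s^{-\eta}$ is decreasing on $(0,\infty)$ and that $f\geq 0$. Fix $u\in\mathcal{S}_\lambda$, so that $u\in\Wpzero{p}$, $u\geq 0$, $u\neq 0$ and
\begin{align*}
\lan A_p(u),h\ran+\lan A_q(u),h\ran=\into u^{-\eta}h\,dx+\lambda\into f(x,u)h\,dx\quad\text{for all }h\in\Wpzero{p},
\end{align*}
while by Proposition \ref{prop_2} the function $\underline{u}\in\interior$ satisfies
\begin{align*}
\lan A_p(\underline{u}),h\ran+\lan A_q(\underline{u}),h\ran=\into \underline{u}^{-\eta}h\,dx\quad\text{for all }h\in\Wpzero{p}.
\end{align*}

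Next I would insert the test function $h=\l(\underline{u}-u\r)^+\in\Wpzero{p}$ into both weak formulations. On the set $\{\underline{u}>u\}$ one has $0\leq u<\underline{u}$ and hence $\underline{u}^{-\eta}\leq u^{-\eta}$, so that pointwise $\underline{u}^{-\eta}\l(\underline{u}-u\r)^+\leq u^{-\eta}\l(\underline{u}-u\r)^+$ on all of $\Omega$. Combined with $f(x,u)\geq 0$ this yields
\begin{align*}
\lan A_p(\underline{u}),\l(\underline{u}-u\r)^+\ran+\lan A_q(\underline{u}),\l(\underline{u}-u\r)^+\ran
&=\into \underline{u}^{-\eta}\l(\underline{u}-u\r)^+\,dx\\
&\leq \into u^{-\eta}\l(\underline{u}-u\r)^+\,dx+\lambda\into f(x,u)\l(\underline{u}-u\r)^+\,dx\\
&=\lan A_p(u),\l(\underline{u}-u\r)^+\ran+\lan A_q(u),\l(\underline{u}-u\r)^+\ran.
\end{align*}
Rearranging this inequality gives
\begin{align*}
&\int_{\{\underline{u}>u\}}\l(|\nabla \underline{u}|^{p-2}\nabla \underline{u}-|\nabla u|^{p-2}\nabla u\r)\cdot\l(\nabla \underline{u}-\nabla u\r)\,dx\\
&\quad+\int_{\{\underline{u}>u\}}\l(|\nabla \underline{u}|^{q-2}\nabla \underline{u}-|\nabla u|^{q-2}\nabla u\r)\cdot\l(\nabla \underline{u}-\nabla u\r)\,dx\leq 0,
\end{align*}
and since both integrands are nonnegative by the monotonicity of $\xi\mapsto|\xi|^{t-2}\xi$ for $t\in\{p,q\}$, each integral must vanish. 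This forces $|\{\underline{u}>u\}|_N=0$ exactly as in the proof of the preceding proposition, whence $\underline{u}\leq u$ for every $u\in\mathcal{S}_\lambda$.

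The step I expect to require the most care is the treatment of the singular terms, which a priori are only integrable when paired with $\Wpzero{p}$-functions and which blow up wherever $u$ vanishes. The key is to avoid forming the difference $\underline{u}^{-\eta}-u^{-\eta}$, a potential $-\infty$, and instead to use the one-sided pointwise bound $\underline{u}^{-\eta}\l(\underline{u}-u\r)^+\leq u^{-\eta}\l(\underline{u}-u\r)^+$ directly. Here the left-hand side is controlled by $\underline{u}^{-\eta}\l(\underline{u}-u\r)^+\leq \underline{u}^{1-\eta}\in\Lp{1}$ via the Lemma of Lazer-McKenna, while $\into u^{-\eta}\l(\underline{u}-u\r)^+\,dx<\infty$ because $u$ is a solution of \eqref{problem}. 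With this observation the chain of inequalities above is rigorously justified, and the $(\Ss)_+$/strict monotonicity structure of $A_p$ and $A_q$ then closes the argument.
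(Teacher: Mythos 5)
Your proof is correct, but it takes a genuinely different route from the paper's. You run the direct weak comparison: test both weak formulations with $(\underline{u}-u)^+$, use the fact that $s\mapsto s^{-\eta}$ is decreasing to get the one-sided pointwise bound $\underline{u}^{-\eta}(\underline{u}-u)^+\leq u^{-\eta}(\underline{u}-u)^+$, add $\lambda f(x,u)\geq 0$, and let the strict monotonicity of $A_p+A_q$ force $|\{\underline{u}>u\}|_N=0$. The integrability issues are handled exactly as they must be: $\into \underline{u}^{-\eta}(\underline{u}-u)^+\,dx$ is finite since $\underline{u}^{-\eta}(\underline{u}-u)^+\leq\underline{u}^{1-\eta}$ (boundedness of $\underline{u}$ suffices here; Lazer--McKenna is what gives $\underline{u}^{-\eta}\in\Lp{1}$ itself), while $\into u^{-\eta}(\underline{u}-u)^+\,dx$ is finite by the very definition of a solution of \eqref{problem}, and testing the equation for $\underline{u}$ against arbitrary $h\in\Wpzero{p}$ is legitimate by the Hardy-inequality remarks following Proposition \ref{prop_2}. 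The paper argues differently: it truncates the singular term at the level $u(x)$ (the function $k(x,s)$ in \eqref{13}), solves the resulting auxiliary singular problem via Proposition 10 of Papageorgiou-R\u{a}dulescu-Repov\v{s} \cite{Papageorgiou-Radulescu-Repovs-2020} to obtain $\tilde{\underline{u}}\in\interior$, proves $\tilde{\underline{u}}\leq u$ by the same kind of test-function argument --- where, crucially, the truncation gives the exact identity $k(x,\tilde{\underline{u}})=u^{-\eta}$ on the set $\{\tilde{\underline{u}}>u\}$, so no monotonicity of the singular term is invoked --- and then identifies $\tilde{\underline{u}}=\underline{u}$ from the uniqueness assertion of Proposition \ref{prop_2}. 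Your argument buys economy: no auxiliary problem, no appeal to an external existence theorem, no uniqueness step. The paper's argument buys robustness: it is the template that survives when $s^{-\eta}$ is replaced by a singular nonlinearity with no monotonicity in $s$, since truncation at $u$ manufactures the comparison identity automatically, whereas your proof hinges on the decreasing character of $s^{-\eta}$ at precisely the one step where the two singular terms are compared.
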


\begin{proof}
    Let $u \in \mathcal{S}_\lambda$. On $\Omega\times(0,+\infty)$ we introduce the Carath\'{e}odory function $k(\cdot,\cdot)$ defined by
    \begin{align}\label{13}
	k(x,s)=
	\begin{cases}
	    s^{-\eta} &\text{if }0<s\leq u(x),\\
	    u(x)^{-\eta}&\text{if }u(x)<s
	\end{cases}
    \end{align}
    for all $(x,s)\in \Omega\times (0,+\infty)$. Then we consider the following Dirichlet $(p,q)$-problem
    \begin{equation*}
	\begin{aligned}
	    -\Delta_p u -\Delta_q u  & =k(x,u)
	    \quad && \text{in } \Omega,\\
	    u & = 0  &&\text{on } \partial \Omega,\\
	    u&>0, \quad 1<q<p.
	\end{aligned}
    \end{equation*}
    Proposition 10 of Papageorgiou-R\u{a}dulescu-Repov\v{s} \cite{Papageorgiou-Radulescu-Repovs-2020} implies that this problem admits a solution
    \begin{align}\label{14}
	\tilde{\underline{u}}\in \interior.
    \end{align}
    This means
    \begin{align}\label{15}
	\l\lan A_p\l(\tilde{\underline{u}}\r),h\r\ran
	+\l\lan A_q\l(\tilde{\underline{u}}\r),h\r\ran
	=\into k\l(x,\tilde{\underline{u}}\r)h\,dx\quad\text{for all }h\in\Wpzero{p}.
    \end{align}
    Choosing $h=\l(\tilde{\underline{u}}-u\r)^+\in\Wpzero{p}$ in \eqref{15} and applying \eqref{13}, $f \geq 0$ and $u\in\mathcal{S}_\lambda$ gives
    \begin{align*}
	&\l\lan A_p(\tilde{\underline{u}}),\l(\tilde{\underline{u}}-u\r)^+\r\ran
	+\l\lan A_q(\tilde{\underline{u}}),\l(\tilde{\underline{u}}-u\r)^+\r\ran\\
	&=\into u^{-\eta} \l(\tilde{\underline{u}}-u\r)^+\,dx\\
	& \leq \into \l[u^{-\eta}+\lambda f(x,u)\r]\l(\tilde{\underline{u}}-u\r)^+\,dx\\
	& =\l\lan A_p(u),\l(\tilde{\underline{u}}-u\r)^+\r\ran+\l\lan A_q(u),\l(\tilde{\underline{u}}-u\r)^+\r\ran.
    \end{align*}
    This implies
    \begin{align*}
	    &\int_{\{\tilde{\underline{u}}>u\}} \l(|\nabla \tilde{\underline{u}}|^{p-2} \nabla \tilde{\underline{u}} - |\nabla u|^{p-2}\nabla u\r) \cdot \l(\nabla \tilde{\underline{u}}-\nabla u\r)\,dx\\
	    &+\int_{\{\tilde{\underline{u}}>u\}} \l(|\nabla \tilde{\underline{u}}|^{q-2} \nabla \tilde{\underline{u}} - |\nabla u|^{q-2}\nabla u\r) \cdot \l(\nabla \tilde{\underline{u}}-\nabla u\r)\,dx\\
	    & \leq 0,
    \end{align*}
    which means $|\{\tilde{\underline{u}}>u\}|_N=0$. Thus,
    \begin{align}\label{16}
	\tilde{\underline{u}} \leq u.
    \end{align}
    From \eqref{16}, \eqref{14}, \eqref{13}, \eqref{15} and Proposition \ref{prop_2} it follows that $\tilde{\underline{u}}=u$. Therefore, $\underline{u} \leq u$ for all $u \in \mathcal{S}_\lambda$.
\end{proof}

As before, using Theorem B.1 of Giacomoni-Schindler-Tak\'{a}\v{c} \cite{Giacomoni-Schindler-Takac-2007}, we have the following result about the solution set $S_\lambda$.
\begin{proposition}\label{prop_6}
    If hypotheses H hold and $\lambda \in\mathcal{L}$, then $S_\lambda \subseteq \interior$.
\end{proposition}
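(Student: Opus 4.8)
The plan is to run the argument in close parallel to the proof of Proposition \ref{prop_3}: first I would use the lower bound of Proposition \ref{prop_5} to tame the singularity, then invoke the boundary regularity theorem to land in $C^1_0(\close)_+$, and finally pass to the interior of the cone via the nonlinear maximum principle.

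First I would fix an arbitrary $u\in\mathcal{S}_\lambda$. By Proposition \ref{prop_5} we have $\underline{u}\leq u$ with $\underline{u}\in\interior$, and this inequality is the crucial ingredient. Since $t\mapsto t^{-\eta}$ is decreasing on $(0,+\infty)$, it gives $u^{-\eta}\leq\underline{u}^{-\eta}$ pointwise in $\Omega$, so the potentially troublesome singular part of the reaction is dominated by the boundary behavior of $\underline{u}$. Because $\underline{u}\in\interior$ behaves near $\partial\Omega$ like the distance function $\dist(\cdot,\partial\Omega)$, the comments following Proposition \ref{prop_2} (via Lazer-McKenna and Hardy's inequality) show that the full reaction $u^{-\eta}+\lambda f(x,u)$ is controlled in exactly the sense required by the regularity machinery.

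Next I would invoke Theorem B.1 of Giacomoni-Schindler-Tak\'{a}\v{c} \cite{Giacomoni-Schindler-Takac-2007}, just as in the proof of Proposition \ref{prop_3}. Since $u$ is a weak solution of \eqref{problem} whose singular right-hand side is dominated by $\underline{u}^{-\eta}+\lambda f(x,u)$, this theorem yields $u\in C^1_0(\close)_+\setminus\{0\}$. Finally, using $f\geq 0$ (see \eqref{1}) together with $u>0$ in $\Omega$, the reaction is nonnegative, whence
\begin{align*}
    \Delta_p u(x)+\Delta_q u(x)\leq 0\quad\text{for a.\,a.\,}x\in\Omega.
\end{align*}
The nonlinear maximum principle of Pucci-Serrin \cite[pp.\,111 and 120]{Pucci-Serrin-2007} then gives $u\in\interior$. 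As $u\in\mathcal{S}_\lambda$ was arbitrary, this establishes $\mathcal{S}_\lambda\subseteq\interior$.

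The main obstacle is the second step. The singular term $u^{-\eta}$ blows up on $\partial\Omega$, so a priori a solution need not even be bounded near the boundary, let alone $C^1$ up to it, and the standard nonlinear regularity theory does not apply to the raw equation. The lower bound $u\geq\underline{u}$ from Proposition \ref{prop_5} is precisely what isolates and controls this singularity, bringing the problem within the scope of the boundary regularity theorem; once $u\in C^1_0(\close)_+$ is secured, the passage to the interior of the positive cone is routine.
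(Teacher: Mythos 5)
Your proposal is correct and is essentially the paper's own argument: the paper disposes of this proposition with the single remark ``As before, using Theorem B.1 of Giacomoni-Schindler-Tak\'a\v{c} \cite{Giacomoni-Schindler-Takac-2007}'', meaning precisely the combination you spell out --- the lower bound $\underline{u}\leq u$ from Proposition \ref{prop_5} to control the singular term, Theorem B.1 for $C^1_0(\overline{\Omega})$-regularity up to the boundary, and the Pucci--Serrin nonlinear maximum principle to pass into $\interior$, exactly as in the proof of Proposition \ref{prop_3}. Your added explanation of why $u\geq\underline{u}$ brings the singular right-hand side within the scope of the regularity theorem is a faithful elaboration of that one-line proof, not a different route.
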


Let $\lambda^*=\sup \mathcal{L}$.

\begin{proposition}
    If hypotheses H hold, then $\lambda^*<\infty$.
\end{proposition}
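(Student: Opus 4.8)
The plan is to prove that \eqref{problem} has no positive solution once $\lambda$ is large; this bounds $\mathcal{L}$ from above and hence yields $\lambda^*=\sup\mathcal{L}<\infty$. I argue by contradiction, so suppose $\lambda\in\mathcal{L}$ and pick $u\in\mathcal{S}_\lambda$. By Propositions \ref{prop_5} and \ref{prop_6} I may use that $u\in\interior$ and $\uu\le u$.

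The first step is to compress the hypotheses into one pointwise lower bound for the reaction. Near $0^+$, the second part of hypothesis H(iv) gives $f(x,s)\ge\tfrac{\hat\eta}{2}s^{\tau-1}$ with $\tau<p$, so that $f(x,s)/s^{p-1}\to+\infty$ as $s\to0^+$; near $+\infty$, hypotheses H(ii)--H(iii) give $f(x,s)/s^{p-1}\to+\infty$ uniformly in $x$; and on each compact subinterval of $(0,\infty)$ the positivity in H(v) yields a uniform positive lower bound. Feeding these three regimes into the reaction (and using that $s^{-\eta}$ itself dominates $s^{p-1}$ as $s\to0^+$), I would show that for every $\hat\xi>0$ there is $\hat\lambda(\hat\xi)>0$ such that
\begin{align*}
 s^{-\eta}+\lambda f(x,s)\ge \hat\xi\,s^{p-1}\quad\text{for a.a. }x\in\Om,\ \text{all }s>0\ \text{and all }\lambda>\hat\lambda(\hat\xi).
\end{align*}
Consequently every $u\in\mathcal{S}_\lambda$ satisfies $-\pl u-\Delta_q u\ge\hat\xi u^{p-1}$ in the weak sense, and I would fix $\hat\xi>\lamp$, the principal eigenvalue of $(-\pl,\Wpzero{p})$.

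The remaining step is to convert this differential inequality into a contradiction, and this is the crux. In the homogeneous situation of \cite{Papageorgiou-Winkert-2019} one simply tests against the positive principal eigenfunction --- equivalently, invokes the Picone inequality for the $p$-Laplacian --- to force $\hat\xi\le\lamp$; here this breaks down, because the non-negative term $\lan A_q u,\cdot\ran$ produced by the $q$-Laplacian cannot be absorbed and the operator $-\pl-\Delta_q$ carries no usable principal eigenvalue. To circumvent this I would use the strong comparison principle of Papageorgiou--\Radulescu--\Repovs \cite{Papageorgiou-Radulescu-Repovs-2020}: the lower bound above lets me regard $u$ as a strict supersolution of an auxiliary $(p,q)$-equation whose coefficient grows with $\lambda$, and comparing $u$ with a fixed positive solution of that equation should produce, via the principle, a quantitative order relation in $\interior$ that is incompatible with the coefficient being arbitrarily large. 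Extracting from this a $\lambda$-independent upper bound for $\hat\xi$ is the delicate point; once available, it contradicts the freedom to enlarge $\hat\xi$ by increasing $\lambda$, so no positive solution can exist beyond the resulting threshold and $\lambda^*<\infty$.
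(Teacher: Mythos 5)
Your first step is sound and essentially reproduces the paper's preliminary estimate: combining H(iv) near zero, H(ii)--(iii) near infinity and H(v) on the intermediate compact interval, one gets a fixed $\hat{\lambda}>0$ with $s^{p-1}\leq \hat{\lambda}f(x,s)$ for a.a. $x\in\Omega$ and all $s\geq 0$. The genuine gap is your second step, and you flag it yourself: the claim that comparing $u$ with ``a fixed positive solution of an auxiliary $(p,q)$-equation'' yields a $\lambda$-independent bound on $\hat{\xi}$ is never carried out, and that is exactly where the proof has to happen. Moreover, the framing through $\hat{\xi}>\hat{\lambda}_1(p)$ is most likely a dead end rather than a delicate-but-fixable point: once the $q$-Laplacian is present, the operator $-\Delta_p-\Delta_q$ is nonhomogeneous and carries no usable principal eigenvalue, so an auxiliary problem of the form $-\Delta_p v-\Delta_q v=\hat{\xi}v^{p-1}$ has none of the spectral or scaling structure your intended contradiction would require; this is precisely the obstruction the paper identifies in its introduction when explaining why the eigenvalue argument of Proposition 3.6 in \cite{Papageorgiou-Winkert-2019} cannot be transplanted to the $(p,q)$-setting.

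The paper's actual mechanism avoids eigenvalues entirely and is local. Suppose $\lambda>\hat{\lambda}$ and $u_\lambda\in\mathcal{S}_\lambda\subseteq\interior$. Fix $\Omega'\subset\subset\Omega$ and let $m_0=\min_{\overline{\Omega'}}u_\lambda>0$. The decisive observation is that the constant function $m_0^\delta=m_0+\delta$ annihilates both $\Delta_p$ and $\Delta_q$, so homogeneity is never needed. With $\rho=\|u_\lambda\|_\infty$ and $\hat{\xi}_\rho$ as in H(v), one verifies on $\Omega'$ that
\begin{align*}
    -\Delta_p m_0^\delta-\Delta_q m_0^\delta+\lambda\hat{\xi}_\rho \bigl(m_0^\delta\bigr)^{p-1}-\lambda \bigl(m_0^\delta\bigr)^{-\eta}
    \leq -\Delta_p u_\lambda-\Delta_q u_\lambda+\lambda\hat{\xi}_\rho u_\lambda^{p-1}-\lambda u_\lambda^{-\eta},
\end{align*}
where the estimate $m_0^{p-1}\leq\hat{\lambda}f(x,m_0)$, the surplus term $(\lambda-\hat{\lambda})f(x,m_0)\geq(\lambda-\hat{\lambda})m_{m_0}>0$ from H(v) (which absorbs the error $\chi(\delta)\to 0^+$ coming from replacing $m_0$ by $m_0+\delta$), and the monotonicity of $s\mapsto f(x,s)+\hat{\xi}_\rho s^{p-1}$ on $[0,\rho]$ are what make the chain of inequalities close. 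Proposition 6 of \cite{Papageorgiou-Radulescu-Repovs-2020} then gives $m_0+\delta<u_\lambda(x)$ for a.a. $x\in\Omega'$ and small $\delta>0$, contradicting the fact that $m_0$ is attained as a minimum of the continuous function $u_\lambda$ on $\overline{\Omega'}$. So the contradiction comes from comparing $u_\lambda$ with a constant slightly above its own interior minimum, not from any eigenvalue threshold; this localization idea is what your proposal is missing and what would be needed to complete it.
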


\begin{proof}
	Hypotheses H(ii), (iii) imply that we can find $M>0$ such that
	\begin{align*}
		f(x,s) \geq s^{p-1}\quad\text{for a.\,a.\,}x\in \Omega \text{ and for all }s \geq M.
	\end{align*}
	Moreover, hypothesis H(iv) implies that there exist $\delta\in(0,1)$ and $\hat{\eta}_1 \in (0,\hat{\eta})$ such that
	\begin{align*}
		f(x,s) \geq \hat{\eta}_1 s^{\tau-1}\geq \hat{\eta}_1 s^{p-1}
	\end{align*}
	for a.\,a.\,$x\in \Omega$ and for all $0\leq s \leq \delta$ since $\tau<p$ and $ \delta<1$. This yields
	\begin{align*}
		\frac{1}{\hat{\eta}_1} f(x,s) \geq s^{p-1}\quad\text{for a.\,a.\,}x\in \Omega \text{ and for all }0 \leq s \leq \delta.
	\end{align*}
	In addition, on account of hypothesis H(v) we can find $\tilde{\lambda}>0$ large enough such that
	\begin{align*}
		\tilde{\lambda} f(x,s) \geq M^{p-1} \quad\text{for a.\,a.\,}x\in \Omega \text{ and for all } \delta \leq s \leq M.
	\end{align*}
	Therefore, taking into account the calculations above, there exists $\hat{\lambda}>0$ large enough such that
    \begin{align}\label{17}
	s^{p-1}\leq \hat{\lambda}f(x,s)\quad\text{for a.\,a.\,}x\in\Omega \text{ and for all }s \geq 0.
    \end{align}
    Let $\lambda>\hat{\lambda}$ and suppose that $\lambda\in\mathcal{L}$. Then we can find $u_\lambda\in\mathcal{S}_\lambda\subseteq \interior$, see Proposition \ref{prop_6}. Let $\Omega'\subset\subset \Omega$ with $C^2$-boundary $\partial\Omega'$. Then $m_0=\min_{\overline{\Omega'}} u_\lambda>0$ since $u_\lambda \in\interior$. Let $\rho=\|u_\lambda\|_\infty$ and let $\hat{\xi}_\rho>0$ be as postulated by hypothesis H(v). For $\delta>0$, we set $m_0^\delta=m_0+\delta$. Applying \eqref{17}, hypothesis H(v) and $u_\lambda\in\mathcal{S}_\lambda$, we have for a.\,a.\,$x\in\Omega'$
    \begin{align*}
	&-\Delta_p m_0^\delta -\Delta_q m_0^\delta +\lambda \hat{\xi}_\rho \l(m_0^\delta\r)^{p-1}-\lambda \l(m_0^\delta\r)^{-\eta}\\
	&\leq \lambda \hat{\xi}_\rho m_0^{p-1}+\chi(\delta) \quad\text{with }\chi(\delta)\to 0^+ \text{ as }\delta \to 0^+\\
	&\leq \l[\lambda \hat{\xi}_\rho+1\r]m_0^{p-1}+\chi(\delta)\\
	&\leq  \hat{\lambda}f(x,m_0)+\lambda \hat{\xi}_\rho m_0^{p-1}+\chi(\delta)\\
	&= \lambda \l[f(x,m_0)+\hat{\xi}_\rho m_0^{p-1}\r]-\l(\lambda-\hat{\lambda}\r)f(x,m_0) +\chi(\delta)\\
	& \leq \lambda \l[ f\l(x,u_\lambda(x)\r)+\hat{\xi}_\rho u_\lambda(x)^{p-1}\r] \quad\text{for }\delta>0\text{ small enough}\\
	&=-\Delta_p u_\lambda(x)-\Delta_q u_\lambda(x) +\lambda \hat{\xi}_\rho u_\lambda(x)^{p-1}-\lambda u_\lambda(x)^{-\eta}.
    \end{align*}
    Note that for $\delta>0$ small enough, we will have
    \begin{align*}
	0<\hat{\eta} \leq \l[\lambda-\hat{\lambda}\r]f(x,m_0)-\chi(\delta)\quad\text{for a.\,a.\,}x\in\Omega',
    \end{align*}
    see hypothesis H(v). Then, invoking Proposition 6 of Papageorgiou-R\u{a}dulescu-Repov\v{s} \cite{Papageorgiou-Radulescu-Repovs-2020}, it follows that
    \begin{align*}
	m_0^\delta<u_\lambda(x)\quad\text{for a.\,a.\,}x\in\Omega'\text{ and for }\delta>0 \text{ small enough},
    \end{align*}
    which contradicts the definition of $m_0$. Therefore, $\lambda\not\in\mathcal{L}$ and so we conclude that $\lambda^*\leq \hat{\lambda}<\infty$.
\end{proof}

Next, we are going to show that $\mathcal{L}$ is an interval. So, we have
\begin{align*}
    \l(0,\lambda^*\r)\subseteq \mathcal{L}\subseteq \l(0,\lambda^*\r].
\end{align*}

\begin{proposition}
    If hypotheses H hold, $\lambda\in \mathcal{L}$ and $0<\mu<\lambda$, then $\mu\in\mathcal{L}$.
\end{proposition}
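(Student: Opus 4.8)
The plan is to use any solution of the larger problem as an upper solution for the smaller one, and $\underline{u}$ as a lower solution, then recover a solution of $(P_\mu)$ by the same truncation-and-minimization scheme employed in the proof that $\mathcal{L}\neq\emptyset$. Since $\lambda\in\mathcal{L}$, by Proposition \ref{prop_6} I may fix $u_\lambda\in\mathcal{S}_\lambda\subseteq\interior$, and by Proposition \ref{prop_5} we have $\underline{u}\leq u_\lambda$, where $\underline{u}\in\interior$ is the unique solution from Proposition \ref{prop_2}. Because $0<\mu<\lambda$ and $f\geq 0$ (see \eqref{1}), the weak formulation of $(P_\lambda)$ gives
\[
-\Delta_p u_\lambda-\Delta_q u_\lambda=u_\lambda^{-\eta}+\lambda f(x,u_\lambda)\geq u_\lambda^{-\eta}+\mu f(x,u_\lambda),
\]
so that $u_\lambda$ is an upper solution for $(P_\mu)$; likewise $\underline{u}$ is a lower solution since its reaction $\underline{u}^{-\eta}$ is dominated by $\underline{u}^{-\eta}+\mu f(x,\underline{u})$.

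Next I would introduce the \Cara truncation cut at $\underline{u}(x)$ and $u_\lambda(x)$,
\[
g_\mu(x,s)=
\begin{cases}
\underline{u}(x)^{-\eta}+\mu f(x,\underline{u}(x)) &\text{if }s<\underline{u}(x),\\
s^{-\eta}+\mu f(x,s) &\text{if }\underline{u}(x)\leq s\leq u_\lambda(x),\\
u_\lambda(x)^{-\eta}+\mu f(x,u_\lambda(x)) &\text{if }u_\lambda(x)<s,
\end{cases}
\]
set $G_\mu(x,s)=\int_0^s g_\mu(x,t)\,dt$, and consider the $C^1$-functional
\[
\psi_\mu(u)=\frac{1}{p}\|\nabla u\|_p^p+\frac{1}{q}\|\nabla u\|_q^q-\into G_\mu(x,u)\,dx .
\]
As in the proof that $\mathcal{L}\neq\emptyset$, the truncation renders the reaction bounded, so $\psi_\mu$ is coercive and sequentially weakly lower semicontinuous; the Weierstra\ss{}-Tonelli theorem then yields a global minimizer $u_\mu\in\Wpzero{p}$ satisfying $\langle A_p(u_\mu),h\rangle+\langle A_q(u_\mu),h\rangle=\into g_\mu(x,u_\mu)h\,dx$ for all $h\in\Wpzero{p}$.

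The core of the argument is to confine $u_\mu$ to the order interval $[\underline{u},u_\lambda]$, exactly as in the steps leading to \eqref{11} and \eqref{12}. Testing with $h=(\underline{u}-u_\mu)^+$ and using $g_\mu(x,u_\mu)\geq\underline{u}^{-\eta}$ on $\{\underline{u}>u_\mu\}$ together with the equation for $\underline{u}$, the strict monotonicity of $A_p$ and $A_q$ forces $|\{\underline{u}>u_\mu\}|_N=0$, hence $\underline{u}\leq u_\mu$. Symmetrically, testing with $h=(u_\mu-u_\lambda)^+$ and using $\mu f(x,u_\lambda)\leq\lambda f(x,u_\lambda)$ on $\{u_\mu>u_\lambda\}$ gives $u_\mu\leq u_\lambda$. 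On $[\underline{u},u_\lambda]$ the truncation is inactive, so $g_\mu(x,u_\mu)=u_\mu^{-\eta}+\mu f(x,u_\mu)$ and $u_\mu$ solves $(P_\mu)$; since $u_\mu\geq\underline{u}>0$ and $u_\mu^{-\eta}h\in\Lp{1}$ is inherited from $\underline{u}^{-\eta}h\in\Lp{1}$, it is a genuine positive solution, whence $\mu\in\mathcal{L}$.

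I expect the only delicate point to be checking that the singular contributions close up cleanly in the two comparison inequalities: the terms $\underline{u}^{-\eta}$ and $u_\lambda^{-\eta}$ must cancel against the equations for $\underline{u}$ and $u_\lambda$ on the respective level sets, leaving only the nonnegative $(p,q)$-monotonicity integrals. This is handled precisely as in the proof that $\mathcal{L}\neq\emptyset$, and all remaining steps (coercivity, weak lower semicontinuity, and the $C^1$-regularity of $\psi_\mu$) are routine repetitions of arguments already carried out.
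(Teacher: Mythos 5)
Your proposal is correct and follows essentially the same route as the paper's proof: the same truncation of the reaction at $\underline{u}$ and $u_\lambda$ (the paper's $e_\mu$ in \eqref{18}), minimization of the resulting coercive $C^1$-functional, and the two comparison tests with $(\underline{u}-u_\mu)^+$ and $(u_\mu-u_\lambda)^+$ to confine the minimizer to $[\underline{u},u_\lambda]$, where the truncation is inactive. Your added remarks on $u_\lambda$ being an upper solution and on $u_\mu^{-\eta}h\in\Lp{1}$ via $\underline{u}$ are consistent with, and slightly more explicit than, the paper's presentation.
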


\begin{proof}
    Since $\lambda\in\mathcal{L}$, we can find $u_\lambda\in\mathcal{S}_\lambda\subseteq \interior$. We know that $\underline{u}\leq u_\lambda$, see Proposition \ref{prop_5}. So, we can define the following truncation $e_\mu\colon\Omega\times\R\to\R$ of the reaction for problem \eqref{problem}
    \begin{align}\label{18}
	e_\mu(x,s)=
	\begin{cases}
	    \underline{u}(x)^{-\eta}+\mu f(x,\underline{u}(x))&\text{if }s<\underline{u}(x),\\
	    s^{-\eta}+\mu f(x,s)&\text{if }\underline{u}(x)\leq s\leq u_\lambda(x),\\
	    u_\lambda(x)^{-\eta}+\mu f\l(x,u_\lambda(x)\r)&\text{if }u_\lambda(x)<s,
	\end{cases}
    \end{align}
    which is a Carath\'{e}odory function. We set $E_\mu(x,s)=\int^s_0 e_\mu(x,t)\,dt$ and consider the $C^1$-functional $\hat{\ph}_\mu\colon\Wpzero{p}\to\R$ defined by
    \begin{align*}
	\hat{\ph}_\mu(u)=\frac{1}{p}\|\nabla u\|_p^p+\frac{1}{q}\|\nabla u\|_q^q -\into E_\mu(x,u)\,dx\quad\text{for all }u\in\Wpzero{p},
    \end{align*}
    see Papageorgiou-Vetro-Vetro \cite{Papageorgiou-Vetro-Vetro-2019}. From \eqref{18} it is clear that $\hat{\ph}_\mu$ is coercive. Moreover, it is sequentially weakly lower semicontinuous. Therefore, we can find $u_\mu\in\Wpzero{p}$ such that
    \begin{align*}
	\hat{\ph}_\mu\l(u_\mu\r)=
	\min \l[\hat{\ph}_\mu(u)\,:\,u\in\Wpzero{p}\r].
    \end{align*}
    In particular, we have $\hat{\ph}_\mu'\l(u_\mu\r)=0$ which means
    \begin{align}\label{19}
	\l\lan A_p\l(u_\mu\r),h\r\ran+\l\lan A_q\l(u_\mu\r),h\r\ran=\into e_\mu(x,u)h\,dx\quad\text{for all }h\in\Wpzero{p}.
    \end{align}
    Choosing $h=\l(\underline{u}-u_\mu\r)^+\in\Wpzero{p}$ in \eqref{19} and applying \eqref{18}, $f\geq 0$ and Proposition \ref{prop_2} yields
    \begin{align*}
	&\l\lan A_p\l(u_\mu\r),\l(\underline{u}-u_\mu\r)^+\r\ran+\l\lan A_q\l(u_\mu\r),\l(\underline{u}-u_\mu\r)^+\r\ran\\
	&=\into \l[\underline{u}^{-\eta}+\mu f(x,\underline{u})\r]\l(\underline{u}-u_\mu\r)^+\,dx\\
	& \geq \into \underline{u}^{-\eta} \l(\underline{u}-u_\mu\r)^+\,dx\\
	&=\l\lan A_p\l(\underline{u}\r),\l(\underline{u}-u_\mu\r)^+\r\ran+\l\lan A_q\l(\underline{u}\r),\l(\underline{u}-u_\mu\r)^+\r\ran.
    \end{align*}
    We obtain $\underline{u} \leq u_\mu$. Furthermore, choosing $h=\l(u_\mu-u_\lambda\r)^+\in\Wpzero{p}$ in \eqref{19} and applying \eqref{18}, $\mu<\lambda$ and $u_\lambda \in\mathcal{S}_\lambda$, we get
    \begin{align*}
	&\l\lan A_p\l(u_\mu\r),\l(u_\mu-u_\lambda\r)^+\r\ran+\l\lan A_q\l(u_\mu\r),\l(u_\mu-u_\lambda\r)^+\r\ran\\
	&=\into \l[u_\lambda^{-\eta}+\mu f(x,u_\lambda)\r]\l(u_\mu-u_\lambda\r)^+\,dx\\
	& \leq  \into \l[u^{-\eta}+\lambda f(x,u_\lambda)\r]\l(u_\mu-u_\lambda\r)^+\,dx\\
	&=\l\lan A_p\l(u_\lambda\r),\l(u_\mu-u_\lambda\r)^+\r\ran+\l\lan A_q\l(u_\lambda\r),\l(u_\mu-u_\lambda\r)^+\r\ran.
    \end{align*}
    Hence, $u_\mu \leq u_\lambda$ and so we have proved that
    \begin{align}\label{20}
	u_\mu\in \l[\underline{u},u_\lambda\r].
    \end{align}
    From \eqref{20}, \eqref{18} and \eqref{19} we infer that
    \begin{align*}
	u_\mu \in\mathcal{S}_\mu \subseteq \interior.
    \end{align*}
    Thus, $\mu \in\mathcal{L}$.
\end{proof}

A byproduct of the proof above is the following corollary.

\begin{corollary}\label{coro_9}
    If hypotheses H hold, $\lambda\in\mathcal{L}$, $u_\lambda\in\mathcal{S}_\lambda\subseteq \interior$ and $\mu \in (0,\lambda)$, then $\mu\in\mathcal{L}$ and there exists $u_\mu\in\mathcal{S}_\mu\subseteq \interior$ such that $u_\mu \leq u_\lambda$.
\end{corollary}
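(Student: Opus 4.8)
The plan is to read off the statement directly from the construction already carried out in the proof of the preceding proposition, the only new point being to keep track of the upper order bound that was produced along the way. Given $\lambda\in\mathcal{L}$, I would fix the \emph{particular} solution $u_\lambda\in\mathcal{S}_\lambda\subseteq\interior$ furnished by the hypothesis (rather than an arbitrary one), and for $\mu\in(0,\lambda)$ repeat verbatim the truncation \eqref{18} of the reaction between $\underline{u}$ and $u_\lambda$, together with the associated coercive, sequentially weakly lower semicontinuous functional $\hat{\ph}_\mu$ on $\Wpzero{p}$.

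Minimizing $\hat{\ph}_\mu$ over $\Wpzero{p}$ produces a critical point $u_\mu$ satisfying \eqref{19}. The two comparison computations from that proof, namely testing with $(\underline{u}-u_\mu)^+$ (using $f\geq 0$ and Proposition \ref{prop_2}) and with $(u_\mu-u_\lambda)^+$ (using $\mu<\lambda$ and $u_\lambda\in\mathcal{S}_\lambda$), together with the strict monotonicity of $A_p$ and $A_q$, give exactly the order relation \eqref{20}, that is $\underline{u}\leq u_\mu\leq u_\lambda$. Since $u_\mu$ lies in the order interval $[\underline{u},u_\lambda]$, the outer branches of the truncation \eqref{18} are never reached, so $e_\mu(x,u_\mu)=u_\mu^{-\eta}+\mu f(x,u_\mu)$ and \eqref{19} becomes precisely the weak formulation of problem \eqref{problem} for the parameter $\mu$. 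Hence $u_\mu\in\mathcal{S}_\mu$, so in particular $\mu\in\mathcal{L}$; Proposition \ref{prop_6} then upgrades this to $u_\mu\in\interior$, while the desired inequality $u_\mu\leq u_\lambda$ is simply the right-hand half of \eqref{20}.

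There is essentially no genuine obstacle here: the corollary is a bookkeeping remark recording that the solution manufactured at level $\mu$ can be arranged to sit beneath the prescribed solution at level $\lambda$. The only point that requires a moment's care is that the bound $u_\mu\leq u_\lambda$ hinges on the truncation being capped at the chosen $u_\lambda$, so one must keep this same $u_\lambda$ fixed throughout and not replace its role by the minimal element $\underline{u}$. With that fixed, both the membership $u_\mu\in\mathcal{S}_\mu\subseteq\interior$ and the inequality $u_\mu\leq u_\lambda$ follow at once from \eqref{20} and Proposition \ref{prop_6}.
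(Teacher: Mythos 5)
Your proof is correct and takes essentially the same route as the paper: the paper presents this corollary precisely as a byproduct of the proof of the preceding proposition, whose truncation \eqref{18} is capped at the prescribed $u_\lambda$, so the minimizer $u_\mu$ of $\hat{\ph}_\mu$ lands in $[\underline{u},u_\lambda]$ by \eqref{20} and is therefore a solution for the parameter $\mu$ lying below $u_\lambda$. Your remark that one must keep that particular $u_\lambda$ fixed throughout (rather than an arbitrary element of $\mathcal{S}_\lambda$) is exactly the point the corollary records.
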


Using the strong comparison principle of Papageorgiou-R\u{a}dulescu-Repov\v{s} \cite{Papageorgiou-Radulescu-Repovs-2020} we can improve the conclusion of this corollary as follows.

\begin{proposition}\label{prop_10}
    If hypotheses H hold, $\lambda\in\mathcal{L}$, $u_\lambda\in\mathcal{S}_\lambda\subseteq \interior$ and $\mu \in (0,\lambda)$, then $\mu\in\mathcal{L}$ and there exists $u_\mu \in\mathcal{S}_\mu\subseteq \interior$ such that
    \begin{align*}
	u_\lambda-u_\mu\in\interior.
    \end{align*}
\end{proposition}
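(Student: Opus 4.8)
The plan is to upgrade the mere inequality $u_\mu \le u_\lambda$ furnished by Corollary \ref{coro_9} to the strict ordering $u_\lambda - u_\mu \in \interior$ by invoking the strong comparison principle of Papageorgiou-\Radulescu-\Repovs \cite{Papageorgiou-Radulescu-Repovs-2020}. By Corollary \ref{coro_9} we already dispose of $u_\mu \in \mathcal{S}_\mu \subseteq \interior$ with $u_\mu \le u_\lambda$, so only the strict gap remains to be established.

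First I would fix $\rho = \|u_\lambda\|_\infty$ and note that $u_\mu, u_\lambda \in [0,\rho]$ pointwise. By hypothesis H(v) there is $\hat{\xi}_\rho > 0$ such that $s \mapsto f(x,s) + \hat{\xi}_\rho s^{p-1}$ is nondecreasing on $[0,\rho]$ for a.a.\ $x \in \Om$. The crucial structural observation is that the singular term must be retained \emph{inside} the differential operator: since $s \mapsto -s^{-\eta}$ is increasing, the operator $u \mapsto -\Delta_p u - \Delta_q u - u^{-\eta} + \mu\hat{\xi}_\rho u^{p-1}$ carries the monotone lower-order structure required by the comparison principle. Adding $\mu\hat{\xi}_\rho(\cdot)^{p-1}$ to both state equations, I would record
\begin{align*}
-\Delta_p u_\mu -\Delta_q u_\mu - u_\mu^{-\eta} + \mu\hat{\xi}_\rho u_\mu^{p-1} &= \mu\l[f(x,u_\mu) + \hat{\xi}_\rho u_\mu^{p-1}\r] =: \hat{h}_1,\\
-\Delta_p u_\lambda - \Delta_q u_\lambda - u_\lambda^{-\eta} + \mu\hat{\xi}_\rho u_\lambda^{p-1} &= \lambda f(x,u_\lambda) + \mu\hat{\xi}_\rho u_\lambda^{p-1} =: \hat{h}_2,
\end{align*}
with $\hat{h}_1, \hat{h}_2 \in \Linf$ thanks to H(i) and $u_\mu, u_\lambda \in \Linf$.

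The heart of the argument is to verify $\hat{h}_1 \prec \hat{h}_2$. Splitting $\lambda f(x,u_\lambda) = \mu f(x,u_\lambda) + (\lambda-\mu) f(x,u_\lambda)$ yields
\begin{align*}
\hat{h}_2 - \hat{h}_1 = \mu\l[\l(f(x,u_\lambda) + \hat{\xi}_\rho u_\lambda^{p-1}\r) - \l(f(x,u_\mu) + \hat{\xi}_\rho u_\mu^{p-1}\r)\r] + (\lambda-\mu)f(x,u_\lambda).
\end{align*}
The bracketed term is nonnegative by the monotonicity from H(v) together with $u_\mu \le u_\lambda \le \rho$, so $\hat{h}_2 - \hat{h}_1 \ge (\lambda-\mu)f(x,u_\lambda)$. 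On an arbitrary compact $K \subseteq \Om$, the fact that $u_\lambda \in \interior$ gives $\min_K u_\lambda =: c_K > 0$, whence H(v) furnishes $f(x,u_\lambda) \ge m_{c_K} > 0$ and therefore $\hat{h}_2 - \hat{h}_1 \ge (\lambda-\mu)m_{c_K} > 0$ for a.a.\ $x \in K$. This is precisely $\hat{h}_1 \prec \hat{h}_2$.

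With $u_\mu \le u_\lambda$, both lying in $\interior$, and $\hat{h}_1 \prec \hat{h}_2$ in hand, the strong comparison principle of \cite{Papageorgiou-Radulescu-Repovs-2020} applies to the shifted singular operator and delivers $u_\lambda - u_\mu \in \interior$, completing the proof. I expect the main obstacle to be exactly the singular term: the naive route of moving $u^{-\eta}$ to the right-hand side fails, because there $u_\lambda^{-\eta} - u_\mu^{-\eta} \le 0$ works against the gap; the term must instead be kept with the operator, exploiting its favorable monotonicity, and one must check that the cited strong comparison principle is formulated so as to accommodate this singular lower-order contribution.
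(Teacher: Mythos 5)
Your proof is correct and follows essentially the same route as the paper: start from Corollary \ref{coro_9}, keep the singular term with the differential operator, add a $\hat{\xi}_\rho(\cdot)^{p-1}$ shift so that H(v) monotonicity absorbs the ordered terms, isolate a gap of the form $(\lambda-\mu)f(x,\cdot)$ which is $\succ 0$ on compact subsets, and invoke the strong comparison principle (Proposition 7 of \cite{Papageorgiou-Radulescu-Repovs-2020}). The only cosmetic differences are that the paper uses the coefficient $\lambda\hat{\xi}_\rho$ rather than $\mu\hat{\xi}_\rho$ and evaluates the gap term at $u_\mu$ instead of $u_\lambda$; both variants work.
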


\begin{proof}
    From Corollary \ref{coro_9} we already have that $\mu\in\mathcal{L}$ and we also know that there exists $u_\mu \in\mathcal{S}_\mu\subseteq\interior$ such that
    \begin{align}\label{21}
	u_\mu \leq u_\lambda.
    \end{align}
    Let $\rho=\|u_\lambda\|_\infty$ and let $\hat{\xi}_\rho>0$ be as postulated by hypothesis H(v). Applying $u_\mu\in\mathcal{S}_\mu$, \eqref{21}, hypothesis H(v) and $\mu<\lambda$, we obtain
    \begin{align}\label{22}
	\begin{split}
	    &-\Delta_p u_\mu(x)-\Delta_q u_\mu(x)+\lambda \hat{\xi}_\rho u_\mu(x)^{p-1}-u_\mu(x)^{-\eta}\\
	    &=\mu f(x,u_\mu(x))+\lambda\hat{\xi}_\rho u_\mu(x)^{p-1}\\
	    & =\lambda \l[f(x,u_\mu(x))+\hat{\xi}_\rho u_\mu(x)^{p-1}\r]-(\lambda-\mu)f(x,u_\mu(x))\\
	    & \leq \lambda \l[f(x,u_\lambda(x)) +\hat{\xi}_\rho u_\lambda(x)^{p-1}\r]\\\
	    & =-\Delta_pu_\lambda(x)-\Delta_q u_\lambda(x) +\lambda \hat{\xi}_\rho u_\lambda(x)^{p-1}-u_\lambda(x)^{-\eta}\quad \text{for a.\,a.\,}x\in\Omega.
	\end{split}
    \end{align}
    Since $u_\mu\in\interior$, because of hypothesis H(v), we have
    \begin{align*}
	0\prec (\lambda-\mu)f(\cdot,u_\mu(\cdot)).
    \end{align*}
    Then, from \eqref{22} and Proposition 7 of Papageorgiou-R\u{a}dulescu-Repov\v{s} \cite{Papageorgiou-Radulescu-Repovs-2020} we conclude that $u_\lambda-u_\mu\in\interior$.
\end{proof}

\begin{proposition}\label{prop_11}
    If hypotheses H hold and $\lambda\in (0,\lambda^*)$, then problem \eqref{problem} has at least two positive solutions
    \begin{align*}
	u_0, \hat{u} \in \interior,\quad u_0\leq \hat{u}, \quad u_0 \neq \hat{u}.
    \end{align*}
\end{proposition}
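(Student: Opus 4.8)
The plan is to realize the first solution as a local minimizer of a truncated, desingularized $C^1$-functional and the second one via the mountain pass theorem applied to the same functional. Fix $\vartheta\in(\lambda,\lambda^*)$. Since $(0,\lambda^*)\subseteq\mathcal L$, we have $\vartheta\in\mathcal L$ and may pick $u_\vartheta\in\mathcal S_\vartheta\subseteq\interior$. Applying Proposition \ref{prop_10} with the roles of $(\lambda,\mu)$ played by $(\vartheta,\lambda)$ produces a solution $u_0\in\mathcal S_\lambda\subseteq\interior$ with
\[
u_\vartheta-u_0\in\interior .
\]
This strict ordering in $C^1_0(\close)$ is the mechanism that will upgrade a weak minimality of $u_0$ into genuine local minimality.

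To isolate the singularity I would use the one-sided truncation
\[
\hat g_\lambda(x,s)=
\begin{cases}
u_0(x)^{-\eta}+\lambda f(x,u_0(x)) & \text{if } s\le u_0(x),\\
s^{-\eta}+\lambda f(x,s) & \text{if } u_0(x)<s,
\end{cases}
\]
and set $\hat\psi_\lambda(u)=\frac1p\|\nabla u\|_p^p+\frac1q\|\nabla u\|_q^q-\into\hat G_\lambda(x,u)\,dx$ with $\hat G_\lambda(x,s)=\int_0^s\hat g_\lambda(x,t)\,dt$. Because $u_0\in\interior$, the factor $u_0^{-\eta}$ is bounded away from the singularity and $\hat\psi_\lambda\in C^1(\Wpzero{p})$. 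Testing $\hat\psi_\lambda'(u)=0$ with $(u_0-u)^+$ and comparing with $u_0\in\mathcal S_\lambda$ exactly as in Proposition \ref{prop_5} shows every critical point $u$ of $\hat\psi_\lambda$ satisfies $u\ge u_0$; there $\hat g_\lambda(x,u)=u^{-\eta}+\lambda f(x,u)$, so $u\in\mathcal S_\lambda\subseteq\interior$ by Proposition \ref{prop_6}. Hence it suffices to exhibit two distinct critical points of $\hat\psi_\lambda$.

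Next I would introduce the companion double truncation $\tilde g_\lambda$, obtained from $\hat g_\lambda$ by additionally freezing the reaction at $u_\vartheta(x)$ for $s>u_\vartheta(x)$, together with its energy $\tilde\psi_\lambda$; this functional is coercive and sequentially weakly lower semicontinuous, so it has a global minimizer $\tilde u$. The usual comparisons (test with $(u_0-\tilde u)^+$ and $(\tilde u-u_\vartheta)^+$) give $\tilde u\in[u_0,u_\vartheta]$, whence $\tilde u\in\mathcal S_\lambda\subseteq\interior$. If $\tilde u\neq u_0$ we are done, since then $u_0\le \tilde u$, $u_0\ne\tilde u$ are the two required solutions. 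Otherwise $u_0$ is the global minimizer of $\tilde\psi_\lambda$, and I use $u_\vartheta-u_0\in\interior$: for $u$ in a small $C^1_0(\close)$-ball around $u_0$ one has $u\le u_\vartheta$, and since $\hat g_\lambda$ and $\tilde g_\lambda$ coincide for arguments $\le u_\vartheta$, on this ball $\hat\psi_\lambda(u)=\tilde\psi_\lambda(u)\ge\tilde\psi_\lambda(u_0)=\hat\psi_\lambda(u_0)$. Thus $u_0$ is a local $C^1_0(\close)$-minimizer of $\hat\psi_\lambda$, hence, by the standard equivalence of $C^1_0(\close)$- and $\Wpzero{p}$-local minimizers for functionals of this type, a local $\Wpzero{p}$-minimizer of $\hat\psi_\lambda$.

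Finally I would run the mountain pass theorem for $\hat\psi_\lambda$. If $u_0$ is not an isolated critical point, the nearby critical points already furnish a second solution; so assume it is isolated, giving the mountain pass geometry around the local minimizer $u_0$. The $(p-1)$-superlinearity in hypothesis H(ii) makes $\hat\psi_\lambda$ unbounded below along the ray through a fixed $e\in\interior$, providing the far point of lower energy. The remaining and principal obstacle is to verify the Cerami condition for $\hat\psi_\lambda$: this is exactly the step where the $p$-Laplacian argument of \cite{Papageorgiou-Winkert-2019} breaks down because of the $q$-Laplacian, and it has to be carried out with the weaker superlinearity hypothesis H(iii) in place of the Ambrosetti-Rabinowitz condition. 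Concretely, for a Cerami sequence one first bounds it in $\Wpzero{p}$ by combining the energy bound with the quantity $f(x,u_n)u_n-pF(x,u_n)$ controlled from below by H(iii), using the admissible range $\tau\in((r-p)\max\{N/p,1\},p^*)$ together with the Sobolev embedding and an interpolation estimate to absorb the reaction; the $(\Ss)_+$-property of $A_p+A_q$ from Proposition \ref{prop_1} then yields strong convergence. The mountain pass theorem now gives a critical point $\hat u$ of $\hat\psi_\lambda$ with $\hat\psi_\lambda(\hat u)>\hat\psi_\lambda(u_0)$, so $\hat u\ne u_0$; by the second paragraph $\hat u\in\mathcal S_\lambda\subseteq\interior$ and $\hat u\ge u_0$, completing the proof.
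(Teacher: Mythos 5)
Your proposal is correct and follows essentially the same route as the paper: the same ordered pair $u_0\leq u_\vartheta$ from Proposition \ref{prop_10}, the same pair of truncations (one-sided at $u_0$, double at $u_\vartheta$) whose coincidence on $[0,u_\vartheta]$ plus $u_\vartheta-u_0\in\interior$ upgrades the global minimizer of the doubly truncated functional to a local $C^1_0(\close)$-, hence $\Wpzero{p}$-, minimizer of the singly truncated one, and the same mountain pass argument with the Cerami condition verified via H(iii), the $L^\tau$ bound, interpolation, and the $(\Ss)_+$ property. The only cosmetic difference is that you invoke the $(\Ss)_+$ property of the sum $A_p+A_q$, whereas the paper uses monotonicity of $A_q$ to reduce to the $(\Ss)_+$ property of $A_p$ alone; these are equivalent two-line arguments.
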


\begin{proof}
    Let $\lambda <\vartheta <\lambda^*$. Due to Proposition \ref{prop_10}, we can find $u_\vartheta \in \mathcal{S}_\vartheta \subseteq \interior$ and $u_0\in\mathcal{S}_\lambda$ such that
    \begin{align}\label{23}
	u_\vartheta-u_0\in\interior.
    \end{align}
    From Proposition \ref{prop_5} we know that $\underline{u}\leq u_0$. Therefore, $u_0^{-\eta} \in \Lp{1}$. So, we can define the following truncation $w_\lambda\colon\Omega\times\R\to\R$ of the reaction in problem \eqref{problem}
    \begin{align}\label{24}
	w_\lambda(x,s)=
	\begin{cases}
	    u_0(x)^{-\eta}+\lambda f(x,u_0(x))&\text{if }s\leq u_0(x),\\
	    s^{-\eta}+\lambda f(x,s)&\text{if }u_0(x)< s.
	\end{cases}
    \end{align}
    Also, using \eqref{23}, we can consider the truncation $\hat{w}_\lambda\colon\Omega\times\R\to\R$ of $w_\lambda(x,\cdot)$ defined by
    \begin{align}\label{25}
	\hat{w}_\lambda(x,s)=
	\begin{cases}
	    w_\lambda(x,s)&\text{if }s\leq u_\vartheta(x),\\
	    w_\lambda(x,u_\vartheta(x))&\text{if }u_\vartheta(x)< s.
	\end{cases}
    \end{align}
    It is clear that both are Carath\'{e}odory function. We set
    \begin{align*}
	W_\lambda(x,s)=\int^s_0 w_\lambda(x,t)\,dt\quad\text{and}\quad
	\hat{W}_\lambda(x,s)=\int^s_0 \hat{w}_\lambda(x,t)\,dt
    \end{align*}
    and consider the $C^1$-functionals $\sigma_\lambda, \hat{\sigma}_\lambda\colon\Wpzero{p}\to\R$ defined by
    \begin{align*}
	\sigma_\lambda(u)
	&=\frac{1}{p}\|\nabla u\|_p^p +\frac{1}{q}\|\nabla u\|_q^q-\into W_\lambda(x,u)\,dx\quad\text{for all }u\in\Wpzero{p},\\
	\hat{\sigma}_\lambda(u)
	&=\frac{1}{p}\|\nabla u\|_p^p +\frac{1}{q}\|\nabla u\|_q^q-\into \hat{W}_\lambda(x,u)\,dx \quad\text{for all }u\in\Wpzero{p}.
    \end{align*}
    From \eqref{24} and \eqref{25} it is clear that
    \begin{align}\label{26}
	\sigma_\lambda \big|_{[0,u_\vartheta]}=\hat{\sigma}_\lambda \big|_{[0,u_\vartheta]}
	\quad\text{and}\quad
	\sigma'_\lambda \big|_{[0,u_\vartheta]}=\hat{\sigma}'_\lambda \big|_{[0,u_\vartheta]}.
    \end{align}
    Using \eqref{24}, \eqref{25} and the nonlinear regularity theory of Lieberman \cite{Lieberman-1991} we obtain that
    \begin{align}\label{27}
	K_{\sigma_\lambda} \subseteq [u_0)\cap \interior
	\quad\text{and}\quad
	K_{\hat{\sigma}_\lambda}\subseteq [u_0,u_\vartheta]\cap \interior.
    \end{align}
    From \eqref{27} we see that we may assume that
    \begin{align}\label{28}
	K_{\sigma_\lambda} \text{ is finite and }
	K_{\sigma_\lambda} \cap [u_0,u_\vartheta]=\{u_0\}.
    \end{align}
    Otherwise we already have a second positive smooth solution larger that $u_0$ and so we are done.
    
    From \eqref{25} and since $u_0^{-\eta} \in \Lp{1}$, it is clear that $\hat{\sigma}_\lambda$ is coercive and it is also sequentially weakly lower semicontinuous. Hence, we find its global minimizer $\tilde{u}_0 \in \Wpzero{p}$ such that
    \begin{align*}
	\hat{\sigma}_\lambda\l(\tilde{u}_0\r)=\min \l[\hat{\sigma}_\lambda(u)\,:\,u\in\Wpzero{p}\r].
    \end{align*}
    By \eqref{27} we see that $\tilde{u}_0\in K_{\hat{\sigma}_\lambda}\subseteq [u_0,u_\vartheta]\cap\interior$. Then, \eqref{26} and \eqref{28} imply $\tilde{u}_0=u_0\in\interior$. Finally, from \eqref{23} we obtain that $u_0$ is a local $C^1_0(\close)$-minimizer of $\sigma_\lambda$ and then by Gasi\'{n}ski-Papageorgiou \cite{Gasinski-Papageorgiou-2012} we have that 
    \begin{align}\label{29}
	u_0 \text{ is also a local }\Wpzero{p}\text{-minimizer of }\sigma_\lambda.
    \end{align}

    From \eqref{29}, \eqref{28} and Theorem 5.7.6 of Papageorgiou-R\u{a}dulescu-Repov\v{s} \cite[p.\,449]{Papageorgiou-Radulescu-Repovs-2019} we know that we can find $\rho\in(0,1)$ small enough such that
    \begin{align}\label{30}
	\sigma_\lambda(u_0)<\inf \l[\sigma_\lambda(u):\|u-u_0\|=\rho\r]=m_\lambda.
    \end{align}
    
    Hypothesis H(ii) implies that if $u\in\interior$, then
    \begin{align}\label{31}
	\sigma_\lambda(tu)\to -\infty \quad\text{as }t\to +\infty.
    \end{align}
    
    {\bf Claim: } The functional $\sigma_\lambda$ satisfies the C-condition.
    
    Consider a sequence $\{u_n\}_{n\geq 1}\subseteq \Wpzero{p}$ such that
    \begin{align}
	|\sigma_\lambda(u_n)|\leq c_6\quad \text{for some }c_6>0 \text{ and for all }n\in\N,\label{32}\\
	(1+\|u_n\|)\sigma'_\lambda(u_n) \to 0 \text{ in } W^{-1,p'}(\Omega) \text{ as }n\to \infty.\label{33}
    \end{align}

    From \eqref{33} we have
    \begin{align}\label{34}
	\l|\l\lan A_p(u_n),h\r\ran+\l\lan A_q(u_n),h\r\ran - \into w_\lambda (x,u_n)h\,dx \r| \leq \frac{\eps_n \|h\|}{1+\|u_n\|}
    \end{align}
    for all $h\in\Wpzero{p}$ with $\eps_n\to 0^+$. We choose $h=-u_n^-\in\Wpzero{p}$ in \eqref{34} and obtain, by applying \eqref{24}, that
    \begin{align*}
	\l\|u_n^-\r\|^p \leq c_7 \quad\text{for some }c_7>0 \text{ and for all }n\in\N.
    \end{align*}
    This shows that
    \begin{align}\label{35}
	\l\{u_n^-\r\}_{n\geq 1} \subseteq \Wpzero{p} \text{ is bounded}.
    \end{align}
    From \eqref{32} and \eqref{35} it follows that
    \begin{align}\label{36}
	\l\|\nabla u_n^+\r\|_p^p +\frac{p}{q}\l\|\nabla u_n^+\r\|_q^q-\into pF\l(x,u_n^+\r)\,dx \leq c_8\l[1+\l\|u_n^+\r\|_\tau\r]
    \end{align}
    for some $c_8>0$ and for all $n \in \N$, see \eqref{24}.
    Moreover, choosing $h=u_n^+\in\Wpzero{p}$ in \eqref{34}, we obtain by using \eqref{24}
    \begin{align}\label{37}
	-\l\|\nabla u_n^+\r\|_p^p-\l\|\nabla u_n^+\r\|_q^q +\into f\l(x,u_n^+\r)u_n^+\,dx \leq c_9
    \end{align}
    for some $c_9>0$ and for all $n\in\N$. Adding \eqref{36} and \eqref{37} and recall that $q<p$, gives
    \begin{align}\label{38}
	\into \big[ f\l(x,u_n^+\r)u_n^+-pF\l(x,u_n^+\r)\big]\,dx\leq c_{10}\l[1+\l\|u_n^+\r\|_\tau\r]
    \end{align}
    for some $c_{10}>0$ and for all $n\in\N$.
    
    Taking hypotheses H(i), (iii) into account, we see that we can find constants $c_{11}, c_{12}>0$ such that
    \begin{align}\label{39}
	c_{11} s^\tau -c_{12} \leq f(x,s)s-pF(x,s)\quad\text{for a.\,a.\,}x\in\Omega \text{ and for all }s\geq 0.
    \end{align}
    Applying \eqref{39} in \eqref{38}, we infer that
    \begin{align*}
	\l\|u_n^+\r\|_\tau^{\tau-1} \leq c_{13}
    \end{align*}
    for some $c_{13}>0$ and for all $n\in\N$. Therefore,
    \begin{align}\label{40}
	\l\{u_n^+\r\}_{n\geq 1} \subseteq \Lp{\tau} \text{ is bounded}.
    \end{align}

    First assume that $p\neq N$. From hypothesis H(iii), we see that we can always assume that $\tau<r<p^*$. So, we can find $t\in (0,1)$ such that
    \begin{align}\label{41}
	\frac{1}{r}=\frac{1-t}{\tau}+\frac{t}{p^*}.
    \end{align}
    Invoking the interpolation inequality, see Papageorgiou-Winkert \cite[Proposition 2.3.17, p.\,116]{Papageorgiou-Winkert-2018}, we have
    \begin{align*}
	\l\|u_n^+\r\|_r \leq \l\|u_n^+\r\|_\tau^{1-r} \l\|u_n^+\r\|^t_{p^*}.
    \end{align*}
    Hence, by \eqref{40},
    \begin{align}\label{42}
	\l\|u_n^+\r\|_r^r \leq c_{14} \l\|u_n^+\r\|^{tr}
    \end{align}
    for some $c_{14}>0$ and for all $n\in\N$. We choose $h=u_n^+\in \Wpzero{p}$ in \eqref{34} to get
    \begin{align*}
	\l\|u_n^+\r\|^p \leq \into w_\lambda\l(x,u_n^+\r)u_n^+\,dx.
    \end{align*}
    Then, from \eqref{24} and hypothesis H(i), it follows that
    \begin{align*}
	\l\|u_n^+\r\|^p \leq \into c_{15} \l[1+\l(u_n^+\r)^r\r]\,dx
    \end{align*}
    for some $c_{15}>0$ and for all $n\in\N$. This implies
    \begin{align*}
	\l\|u_n^+\r\|^p \leq  
	c_{16}  \l[1+\l\|u_n^+\r\|_r^r\r]
    \end{align*}
    for some $c_{16}>0$ and for all $n\in\N$. Finally, from \eqref{42}, we then obtain
    \begin{align}\label{43}
	\l\|u_n^+\r\|^p \leq  
	c_{17}  \l[1+\l\|u_n^+\r\|^{tr}\r]
    \end{align}
    for some $c_{17}>0$ and for all $n\in\N$.
    
    If $N<p$, then $p^*=\infty$ and so from \eqref{41} we have $tr=r-\tau$, which by hypothesis H(iii) leads to $tr<p$.
    
    If $N>p$, then $p^*=\frac{Np}{N-p}$. From \eqref{41} it follows
    \begin{align*}
	tr=\frac{(r-\tau)p^*}{p^*-\tau},
    \end{align*}
    which implies
    \begin{align*}
	tr=\frac{(r-\tau)Np}{N(p-\tau)+\tau p}<p.
    \end{align*}
    
    Therefore, from \eqref{43} we infer that
    \begin{align}\label{44}
	\l\{u_n^+\r\}_{n\geq 1} \subseteq \Wpzero{p}\text{ is bounded.}
    \end{align}
    
    If $N=p$, then by the Sobolev embedding theorem, we know that $\Wpzero{p}\hookrightarrow \Lp{s}$ continuously for all $1\leq s<\infty$. So, for the argument above to work, we need to replace $p^*$ by $s>r>\tau$ in \eqref{41} which yields
    \begin{align*}
	\frac{1}{r}=\frac{1-t}{\tau}+\frac{t}{s}.
    \end{align*}
    Then, by hypothesis H(iii), we obtain
    \begin{align*}
	tr=\frac{(r-\tau)s}{s-\tau}\to r-\tau <p \quad\text{as }s\to +\infty.
    \end{align*}
    We choose $s>r$ large enough so that $tr<p$. Then, we reach again \eqref{44}.

    From \eqref{44} and \eqref{35} it follows that
    \begin{align*}
	\l\{u_n\r\}_{n\geq 1}\subseteq \Wpzero{p}\text{ is bounded}.
    \end{align*}
    So, we may assume that
    \begin{align}\label{45}
	u_n\weak u \text{ in }\Wpzero{p}\quad\text{and}\quad u_n\to u \text{ in }\Lp{r}.
    \end{align}
    
    In \eqref{34} we choose $h=u_n-u\in\Wpzero{p}$, pass to the limit as $n\to \infty$ and use \eqref{45}. This gives
    \begin{align*}
	\lim_{n\to\infty} \l[\l\lan A_p(u_n),u_n-u\r\ran+\l\lan A_q(u_n),u_n-u\r\ran\r] =0.
    \end{align*}
    The monotonicity of $A_q$ implies
    \begin{align*}
	\lim_{n\to\infty} \l[\l\lan A_p(u_n),u_n-u\r\ran+\l\lan A_q(u),u_n-u\r\ran\r] \leq 0
    \end{align*}
    and from \eqref{45} one has
    \begin{align*}
	\limsup_{n\to\infty} \l\lan A_p(u_n),u_n-u\r\ran\leq 0.
    \end{align*}
    Hence, by Proposition \ref{prop_1}, it follows
    \begin{align*}
	u_n\to u \quad\text{in }\Wpzero{p}.
    \end{align*}
    
    Therefore, $\sigma_\lambda$ satisfies the C-condition and this proves the Claim.
    
    Then, \eqref{30}, \eqref{31} and the Claim permit the use of the mountain pass theorem. So, we can find $\hat{u}\in\Wpzero{p}$ such that
    \begin{align}\label{46}
	\hat{u} \in K_{\sigma_\lambda}\subseteq [u_0)\cap\interior\quad\text{and}\quad
	\sigma_\lambda(u_0)<m_\lambda\leq \sigma_\lambda\l(\hat{u}\r),
    \end{align}
    see \eqref{27} and \eqref{30}, respectively.
    
    From \eqref{46}, \eqref{24} and \eqref{34}, we conclude that
    \begin{align*}
	\hat{u} \in\mathcal{S}_\lambda \subseteq \interior,\quad u_0\leq \hat{u}, \quad u_0 \neq \hat{u}.
    \end{align*}
\end{proof}

\begin{proposition}
    If hypotheses H hold, then $\lambda^*\in\mathcal{L}$.
\end{proposition}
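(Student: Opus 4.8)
The plan is to realize $(P_{\lambda^*})$ as a limit of the problems $(P_{\lambda_n})$ as $\lambda_n\uparrow\lambda^*$. First I would fix a sequence $\{\lambda_n\}_{n\ge 1}\subseteq\mathcal L$ with $\lambda_n\uparrow\lambda^*$ and, for each $n$, choose $u_n\in\mathcal S_{\lambda_n}\subseteq\interior$. By Proposition \ref{prop_5} we have $\uu\le u_n$ for all $n$, so in particular $u_n^{-\eta}\le\uu^{-\eta}$ and, by the Hardy-type bound recorded after Proposition \ref{prop_2}, the singular terms are uniformly controlled in the sense that $\into u_n^{-\eta}|h|\,dx\le\hat c\|h\|$ for all $h\in\Wpzero{p}$. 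Each $u_n$ satisfies
\begin{align*}
    \lan A_p(u_n),h\ran+\lan A_q(u_n),h\ran=\into u_n^{-\eta}h\,dx+\lambda_n\into f(x,u_n)h\,dx\quad\text{for all }h\in\Wpzero{p}.
\end{align*}

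The first and decisive step is to show that $\{u_n\}_{n\ge 1}$ is bounded in $\Wpzero{p}$. I would argue by contradiction, assuming $\|u_n\|\to\infty$ and normalizing $y_n=u_n/\|u_n\|$, so that $\|y_n\|=1$ and, along a subsequence, $y_n\weak y\ge 0$ in $\Wpzero{p}$ with $y_n\to y$ in $\Lp{r}$ and a.e. Dividing the weak formulation by $\|u_n\|^{p-1}$, the $q$-Laplacian contribution vanishes because $q<p$ and the singular contribution vanishes because of the uniform bound $u_n^{-\eta}\le\uu^{-\eta}$; what survives is the parametric term $\lambda_n\|u_n\|^{1-p}\into f(x,u_n)h\,dx$. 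Here the superlinearity of $f$ forces a contradiction: on the set $\{y>0\}$ one has $u_n\to+\infty$, so by hypothesis H(ii) the quotient $f(x,u_n)/u_n^{p-1}\to+\infty$, while the degenerate case $y\equiv 0$ is ruled out using the quantitative superlinear balance of hypothesis H(iii) together with the estimate \eqref{39} and the interpolation argument \eqref{41}--\eqref{44}, exactly as in the verification of the C-condition in Proposition \ref{prop_11}. This dichotomy is the technical heart of the argument and the step I expect to be the main obstacle.

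Once boundedness is secured, I would pass to a further subsequence with $u_n\weak u^*$ in $\Wpzero{p}$ and $u_n\to u^*$ in $\Lp{r}$ and a.e. Since $\uu\le u_n$ for all $n$, the limit inherits $\uu\le u^*$, whence $u^*\neq 0$, $u^*>0$ in $\Omega$, and $(u^*)^{-\eta}\le\uu^{-\eta}$. To identify $u^*$ as a solution I would upgrade this to strong convergence via the $(\Ss)_+$ property (Proposition \ref{prop_1}). Testing the weak formulation with $h=u_n-u^*$, the parametric term tends to $0$ by hypothesis H(i) and the $\Lp{r}$-convergence, and the singular term, written as $\into u_n^{1-\eta}\,dx-\into u_n^{-\eta}u^*\,dx$, tends to $\into (u^*)^{1-\eta}\,dx-\into (u^*)^{1-\eta}\,dx=0$ by continuity of the power map on $\Lp{r}$ for the first integral and by dominated convergence (dominating function $\uu^{-\eta}u^*\in\Lp{1}$) for the second. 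Hence $\limsup_{n\to\infty}\lan A_p(u_n)+A_q(u_n),u_n-u^*\ran=0$, and the monotonicity of $A_q$ together with the weak convergence yields $\lims\lan A_p(u_n),u_n-u^*\ran\le 0$, so $u_n\to u^*$ in $\Wpzero{p}$.

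Finally I would pass to the limit in the full weak formulation: $A_p(u_n)\to A_p(u^*)$ and $A_q(u_n)\to A_q(u^*)$ by the continuity in Proposition \ref{prop_1}, the parametric term converges by hypothesis H(i) and the $\Lp{r}$-convergence, and $\into u_n^{-\eta}h\,dx\to\into (u^*)^{-\eta}h\,dx$ by dominated convergence using the domination $\uu^{-\eta}|h|\in\Lp{1}$ and the a.e.\ convergence $u_n^{-\eta}\to (u^*)^{-\eta}$ (splitting $h=h^+-h^-$ if needed). Since $\lambda_n\to\lambda^*$, this shows that $u^*$ solves $(P_{\lambda^*})$ weakly with $u^*\ge\uu>0$, so that $u^*\in\mathcal S_{\lambda^*}$ and therefore $\lambda^*\in\mathcal L$.
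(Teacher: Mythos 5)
Your Steps 2 and 3 (strong convergence via the $(\Ss)_+$-property, with the singular term dominated by $\uu^{-\eta}$ and Hardy's inequality, and the passage to the limit in the weak formulation) are correct and essentially the same as the paper's. The genuine gap is in Step 1, exactly where you flag it. The paper does not prove boundedness of $\{u_n\}_{n\geq 1}$ by a normalization/contradiction argument; it first records a uniform upper bound on the energies, $\ph_{\lambda}(u_n)\leq c_{18}$ (its \eqref{49}), and only then repeats the computation from the Claim in Proposition \ref{prop_11}. That computation needs an upper energy bound in an essential way: inequality \eqref{36} is a consequence of the bound \eqref{32} on the functional values, and it is only by adding \eqref{36} to the tested equation \eqref{37} that one obtains the upper bound \eqref{38} on $\into \l[f\l(x,u_n^+\r)u_n^+-pF\l(x,u_n^+\r)\r]dx$, to which hypothesis H(iii), estimate \eqref{39} and the interpolation argument \eqref{41}--\eqref{44} are then applied. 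For your $u_n$ you only know that they solve the equation, and the equation alone gives no upper bound on $\into \l[f(x,u_n)u_n-pF(x,u_n)\r]dx$: H(iii) bounds this quantity from \emph{below}, not above. Consequently, your plan to rule out the degenerate case $y\equiv 0$ ``exactly as in the verification of the C-condition'' is not available --- none of \eqref{36}--\eqref{38} can even be written down in your setting. Your Fatou/H(ii) argument does exclude $y\not\equiv 0$, but that is the easy half of the dichotomy; for a superlinear problem, membership in $\mathcal{S}_{\lambda_n}$ by itself is not known to force boundedness, which is precisely why the paper secures \eqref{49} first.

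To repair the argument you must establish the energy bound before invoking the Claim; once you have it, the contradiction setup is unnecessary, since the Claim's computation yields boundedness directly. One way to do this, consistent with the paper's framework, is not to take arbitrary $u_n\in\mathcal{S}_{\lambda_n}$ but the solutions produced by the truncation construction showing that $\mathcal{L}$ is an interval (see \eqref{18}--\eqref{20} and Corollary \ref{coro_9}, applied with $\mu=\lambda_n<\lambda_{n+1}\in\mathcal{L}$): there $u_{\lambda_n}$ is a global minimizer of the coercive truncated functional $\hat{\ph}_{\lambda_n}$, so $\hat{\ph}_{\lambda_n}\l(u_{\lambda_n}\r)\leq\hat{\ph}_{\lambda_n}(\uu)$, and since on the order interval $[\uu,u_{\lambda_{n+1}}]$ the functionals $\hat{\ph}_{\lambda_n}$ and $\ph_{\lambda_n}$ differ only by a quantity involving fixed integrals of $\uu$, this gives $\ph_{\lambda_n}\l(u_{\lambda_n}\r)\leq c_{18}$ uniformly in $n$, which is what the paper's \eqref{49} encapsulates. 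With that bound in hand, the rest of your proof (the $(\Ss)_+$ step and the limit passage) goes through as you wrote it.
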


\begin{proof}
    Let $0<\lambda_n <\lambda^*$ with $n\in\N$ and assume that $\lambda_n\nearrow \lambda^*$. By Proposition \ref{prop_5} we can find $u_n\in\mathcal{S}_{\lambda_n}\subseteq \interior$ such that
    \begin{align*}
	\underline{u}\leq u_n\quad\text{for all }n\in\N
    \end{align*}
    and
    \begin{align}\label{48}
	\l\lan A_p(u_n),h\r\ran+\l\lan A_q(u_n),h\r\ran =\into \l[ u_n^{-\eta}+\lambda_n f(x,u_n)\r]h\,dx 
    \end{align}
    for all $h\in\Wpzero{p}$ and for all $n\in\N$. From hypothesis H(iii), we have
    \begin{align}\label{49}
	\ph_\lambda(u_n) \leq c_{18}
    \end{align}
    for some $c_{18}>0$ and for all $n\in\N$, where $\ph_\lambda$ is the energy functional of problem \eqref{problem}.
    
    From \eqref{48}, \eqref{49} and reasoning as in the Claim in the proof of Proposition \ref{prop_11}, we obtain that
    \begin{align}\label{50}
	u_n\to u_*\quad\text{in }\Wpzero{p}.
    \end{align}
    
    So, if in \eqref{48} we pass to the limit as $n\to\infty$ and use \eqref{50}, then
    \begin{align*}
	\l\lan A_p(u_*),h\r\ran+\l\lan A_q(u_*),h\r\ran =\into \l[ u_*^{-\eta}+\lambda^* f(x,u_*)\r]h\,dx 
    \end{align*}
    for all $h\in \Wpzero{p}$ and $\underline{u}\leq u_*$. It follows that $u_*\in\mathcal{S}_{\lambda^*}\subseteq \interior$ and so $\lambda^*\in\mathcal{L}$.
\end{proof}

Therefore, we have
\begin{align*}
    \mathcal{L}=\l(0,\lambda^*\r].
\end{align*}

We can state the following bifurcation-type theorem describing the variations in the set of positive solutions as the parameter $\lambda$ moves in $(0,+\infty)$.

\begin{theorem}
    If hypotheses H hold, then there exist $\lambda^*>0$ such that
    \begin{enumerate}
	\item[(a)]
	    for every $0<\lambda<\lambda^*$, problem \eqref{problem} has at least two positive solutions
	    \begin{align*}
		u_0, \hat{u} \in \interior, \quad u_0 \leq \hat{u},\quad u_0\neq \hat{u};
	    \end{align*}
	  \item[(b)]
	      for $\lambda=\lambda^*$, problem \eqref{problem} has at least one positive solution
	      \begin{align*}
		  u_*\in\interior;
	      \end{align*}
	  \item[(c)]
	      for every $\lambda>\lambda^*$, problem \eqref{problem} has no positive solutions.
    \end{enumerate}
\end{theorem}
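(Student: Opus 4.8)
The plan is to assemble the results already established in the preceding propositions, since the theorem is in essence a summary of the structure of $\mathcal{L}$ and $\mathcal{S}_\lambda$ that has been built up step by step. First I would set $\lambda^*=\sup\mathcal{L}$ and verify that $\lambda^*\in(0,\infty)$. Positivity follows from the first proposition of this section, which shows $(0,\lambda_0]\subseteq\mathcal{L}$ for some $\lambda_0>0$ and hence $\lambda^*\geq\lambda_0>0$; finiteness is exactly the content of the proposition asserting $\lambda^*<\infty$. Thus $\lambda^*$ is a well-defined positive real number, as required by the statement.

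For part (a), I would simply invoke Proposition \ref{prop_11}: for every $\lambda\in(0,\lambda^*)$ it produces the two ordered positive solutions $u_0,\hat{u}\in\interior$ satisfying $u_0\leq\hat{u}$ and $u_0\neq\hat{u}$, which is precisely what (a) demands. For part (b), the proposition establishing $\lambda^*\in\mathcal{L}$ furnishes a positive solution $u_*\in\mathcal{S}_{\lambda^*}$, and then Proposition \ref{prop_6} guarantees the regularity $u_*\in\interior$.

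For part (c), I would combine the interval property (the proposition showing that $\mu\in\mathcal{L}$ whenever $\lambda\in\mathcal{L}$ and $0<\mu<\lambda$) with $\lambda^*\in\mathcal{L}$ to conclude that $\mathcal{L}=(0,\lambda^*]$. Consequently, for every $\lambda>\lambda^*$ we have $\lambda\notin\mathcal{L}$, which by the definition of $\mathcal{L}$ means $\mathcal{S}_\lambda=\emptyset$; that is, problem \eqref{problem} admits no positive solution.

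Since all the analytic substance -- the variational mountain-pass construction of the second solution in Proposition \ref{prop_11}, the truncation and comparison arguments yielding the lower bound $\underline{u}\leq u$ and the nonlinear regularity $\mathcal{S}_\lambda\subseteq\interior$, the finiteness argument based on the strong comparison principle, and the passage to the limit $\lambda_n\nearrow\lambda^*$ -- has already been carried out in the earlier results, no genuinely new obstacle remains at this stage. The only point requiring care is to confirm that the single number $\lambda^*$ is simultaneously finite, attained (so that (b) holds), and the exact right endpoint of the admissible interval (so that (a) and (c) match up without a gap), and each of these three facts is supplied by a distinct cited proposition.
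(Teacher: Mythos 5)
Your proposal is correct and matches the paper exactly: the paper gives no separate proof of this theorem precisely because it is the summary you describe, assembled from Proposition 3.1 ($\mathcal{L}\neq\emptyset$, hence $\lambda^*>0$), the finiteness result $\lambda^*<\infty$, Proposition \ref{prop_11} for the two solutions when $0<\lambda<\lambda^*$, the proposition $\lambda^*\in\mathcal{L}$ together with Proposition \ref{prop_6} for part (b), and the identity $\mathcal{L}=(0,\lambda^*]$ (from the interval property and the definition of $\lambda^*$ as supremum) for part (c). Nothing further is needed.
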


\section{Minimal positive solutions} 

In this section we show that for every $\lambda\in\mathcal{L}=(0,\lambda^*]$, problem \eqref{problem} has a smallest positive solutions $u^*\in\interior$ and we investigate the monotonicity and continuity properties of the map $\lambda\to u^*_\lambda$.

\begin{proposition}
    If hypotheses H hold and $\lambda\in\mathcal{L}$, then problem \eqref{problem} has a smallest positive solution $u^*_\lambda \in\mathcal{S}_\lambda\subseteq \interior$, that is, $u^*_\lambda\leq u$ for all $u \in \mathcal{S}_\lambda$.
\end{proposition}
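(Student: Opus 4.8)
The plan is to establish the existence of a smallest element of $\mathcal{S}_\lambda$ by combining a lattice argument (showing $\mathcal{S}_\lambda$ is downward directed) with a monotone approximation and the $(S)_+$-property of the operator $L=A_p+A_q$. First I would recall from Proposition \ref{prop_5} that every $u\in\mathcal{S}_\lambda$ satisfies $\underline{u}\leq u$, so the set $\mathcal{S}_\lambda$ is bounded below by $\underline{u}\in\interior$; this gives a natural candidate floor for the infimum. The standard approach in this setting (as in Papageorgiou--R\u{a}dulescu--Repov\v{s} and the authors' earlier work) is to invoke a result of Dunford--Schwartz guaranteeing that a downward directed set admits a decreasing sequence $\{u_n\}_{n\geq 1}\subseteq\mathcal{S}_\lambda$ with $\inf_n u_n=\inf\mathcal{S}_\lambda$ pointwise.

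The key preliminary step is therefore to prove that $\mathcal{S}_\lambda$ is downward directed, i.e.\ given $u_1,u_2\in\mathcal{S}_\lambda$ there exists $u\in\mathcal{S}_\lambda$ with $u\leq u_1$ and $u\leq u_2$. To do this I would set $m=\min\{u_1,u_2\}$, note $\underline{u}\leq m$, and build a truncation of the reaction at $m$: define a Carath\'{e}odory function that equals $s^{-\eta}+\lambda f(x,s)$ on $[\underline{u}(x),m(x)]$ and freezes appropriately outside, solve the associated minimization problem for a coercive, sequentially weakly lower semicontinuous $C^1$-functional, and then use test functions $(\underline{u}-v)^+$ and $(v-m)^+$ together with the strict monotonicity of $A_p$ and $A_q$ (exactly as in the proofs of Propositions above) to trap the minimizer $v$ in $[\underline{u},m]$. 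The containment $v\leq m\leq u_i$ and the fact that on $[\underline{u},m]$ the truncated reaction agrees with the original one then force $v\in\mathcal{S}_\lambda$, establishing downward directedness.

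With the decreasing sequence $\{u_n\}_{n\geq 1}$ in hand, I would pass to the limit. Testing \eqref{48}-type weak formulations with $u_n$ itself shows $\{u_n\}_{n\geq 1}$ is bounded in $\Wpzero{p}$ (using $\underline{u}\leq u_n\leq u_1$ to control both the singular term via Hardy's inequality and $f(x,u_n)$ via hypothesis H(i)), so along a subsequence $u_n\weak u^*_\lambda$ in $\Wpzero{p}$ and $u_n\to u^*_\lambda$ in $\Lp{r}$ and pointwise. Choosing $h=u_n-u^*_\lambda$ in the weak formulation and exploiting the monotonicity of $A_q$ exactly as in the Claim of Proposition \ref{prop_11} yields $\limsup_{n\to\infty}\langle A_p(u_n),u_n-u^*_\lambda\rangle\leq 0$, whence the $(S)_+$-property from Proposition \ref{prop_1} gives strong convergence $u_n\to u^*_\lambda$ in $\Wpzero{p}$. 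Passing to the limit then shows $u^*_\lambda$ solves the weak formulation, and since $\underline{u}\leq u^*_\lambda$ with $u^*_\lambda\neq 0$ we get $u^*_\lambda\in\mathcal{S}_\lambda\subseteq\interior$ by Proposition \ref{prop_6}; minimality $u^*_\lambda\leq u$ for all $u\in\mathcal{S}_\lambda$ follows from $u^*_\lambda=\inf_n u_n=\inf\mathcal{S}_\lambda$.

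The main obstacle I anticipate is controlling the singular term $u_n^{-\eta}$ when passing to the limit: since $u_n$ decreases, $u_n^{-\eta}$ increases, and one must ensure $\into u_n^{-\eta}(u_n-u^*_\lambda)\,dx\to 0$ and that the limit identity retains the singular right-hand side. The uniform lower bound $\underline{u}\leq u_n$ is exactly what rescues this: it gives the domination $u_n^{-\eta}\leq\underline{u}^{-\eta}\in\Lp{1}$ and, via Hardy's inequality, the integrability $\into \underline{u}^{-\eta}|h|\,dx\leq\hat{c}\|h\|$ needed to justify the passage to the limit in the singular integral by dominated convergence. Handling this term carefully is the crux; the remaining convergence and regularity arguments are routine adaptations of the techniques already used in the preceding propositions.
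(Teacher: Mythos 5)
Your overall route coincides with the paper's: downward directedness of $\mathcal{S}_\lambda$, extraction of a decreasing sequence realizing $\inf\mathcal{S}_\lambda$, boundedness in $\Wpzero{p}$ from the bracketing $\underline{u}\leq u_n\leq u_1$, strong convergence via the monotonicity of $A_q$ plus the $(\Ss)_+$-property of $A_p$, and passage to the limit with the singular term dominated by $\underline{u}^{-\eta}$ (Hardy's inequality); your concluding identification $u^*_\lambda=\inf\mathcal{S}_\lambda$ and the membership $u^*_\lambda\in\interior$ are exactly as in the paper. The one genuine difference is that you prove downward directedness yourself, whereas the paper simply quotes Proposition 18 of Papageorgiou--R\u{a}dulescu--Repov\v{s} \cite{Papageorgiou-Radulescu-Repovs-2020} (and Lemma 3.10 of Hu--Papageorgiou \cite{Hu-Papageorgiou-1997} for the decreasing sequence, which plays the role of your Dunford--Schwartz citation). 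Within that self-contained step there is a gap you should not gloss over: trapping the minimizer $v$ of the truncated functional from above by $m=\min\{u_1,u_2\}$ is \emph{not} ``exactly as in the proofs of the Propositions above,'' because in all of those comparisons the upper truncation level ($\overline{u}$, $u_\lambda$, $u_\vartheta$) was itself a solution or supersolution whose weak formulation could be tested against the positive part. Here $m$ is not a solution, and the needed inequality
\begin{align*}
\into \l[m^{-\eta}+\lambda f(x,m)\r]\l(v-m\r)^+\,dx \leq \l\lan A_p(m),\l(v-m\r)^+\r\ran+\l\lan A_q(m),\l(v-m\r)^+\r\ran
\end{align*}
is precisely the assertion that the minimum of two weak solutions is a weak supersolution. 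That lattice property does hold for $(p,q)$-type operators, but it requires a separate argument or citation; the naive attempt to test the equations of $u_1$, $u_2$ against $(v-u_i)^+\chi_{\{m=u_i\}}$ fails because these are not admissible Sobolev test functions. So either supply that lemma or, like the paper, cite the downward directedness outright; with that repaired, the rest of your argument is correct.
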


\begin{proof}
    From Proposition 18 of Papageorgiou-R\u{a}dulescu-Repov\v{s} \cite{Papageorgiou-Radulescu-Repovs-2020} we know that the set $\mathcal{S}_\lambda\subseteq \Wpzero{p}$ is downward directed. So, invoking Lemma 3.10 of Hu-Papageorgiou \cite[p.\,178]{Hu-Papageorgiou-1997}, we can find a decreasing sequence $\{u_n\}_{n\geq 1}\subseteq \mathcal{S}_\lambda$ such that
    \begin{align}\label{51}
	\underline{u} \leq u_n\leq u_1 \text{ for all }n\in\N,\quad \inf_{n\geq 1} u_n = \inf \mathcal{S}_\lambda,
    \end{align}
    see Proposition \ref{prop_5}. From \eqref{51} we see that $\{u_n\}_{n\geq 1}\subseteq \Wpzero{p}$ is bounded. From this, as in the proof of Proposition \ref{prop_11}, using Proposition \ref{prop_1}, we obtain
    \begin{align*}
	u_n \to u^*_\lambda \quad\text{in }\Wpzero{p}, \quad \underline{u}\leq u^*_\lambda.
    \end{align*}
    From \eqref{51} it follows
    \begin{align*}
	u^*_\lambda \in \mathcal{S}_\lambda \subseteq \interior\quad\text{and}\quad u^*_\lambda=\inf \mathcal{S}_\lambda.
    \end{align*}
\end{proof}

In the next proposition we examine the monotonicity and continuity properties of the map $\lambda \to u^*_\lambda$ from $\mathcal{L}=(0,\lambda^*]$ into $C^1_0(\close)$.

\begin{proposition}
    If hypotheses H hold, then the minimal solution map $\lambda \to u^*_\lambda$ from $\mathcal{L}=(0,\lambda^*]$ into $C^1_0(\close)$ is
    \begin{enumerate}
	\item[(a)]
	    strictly increasing in the sense that
	    \begin{align*}
		0<\mu<\lambda\leq \lambda^* \quad\text{implies}\quad
		u^*_\lambda-u^*_\mu \in \interior;
	    \end{align*}
	\item[(b)]
	    left continuous.
    \end{enumerate}
\end{proposition}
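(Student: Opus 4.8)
For part (a), I would deduce the strict monotonicity from Proposition \ref{prop_10} combined with the elementary cone inclusion $\interior + C^1_0(\close)_+ \subseteq \interior$ (valid for any convex cone with nonempty interior). Fix $0<\mu<\lambda\le\lambda^*$. Applying Proposition \ref{prop_10} with the minimal solution $u^*_\lambda\in\mathcal{S}_\lambda$ playing the role of $u_\lambda$ produces some $u_\mu\in\mathcal{S}_\mu$ with $u^*_\lambda-u_\mu\in\interior$. By minimality of $u^*_\mu$ we have $u^*_\mu\le u_\mu$, hence $u_\mu-u^*_\mu\in C^1_0(\close)_+$. Writing $u^*_\lambda-u^*_\mu=(u^*_\lambda-u_\mu)+(u_\mu-u^*_\mu)$ and using the inclusion above, I obtain $u^*_\lambda-u^*_\mu\in\interior$, which is exactly the asserted strict monotonicity (and covers the endpoint $\lambda=\lambda^*$ since $\lambda^*\in\mathcal{L}$).

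For part (b), I would establish sequential left continuity. Fix $\lambda\in\mathcal{L}$ and a sequence $\lambda_n\nearrow\lambda$ with $\lambda_n\in\mathcal{L}$, and set $v_n=u^*_{\lambda_n}$. By part (a) the sequence is increasing and $\underline{u}\le v_n\le u^*_\lambda$, so it is uniformly bounded in $\Linf$. Testing the weak formulation of $v_n\in\mathcal{S}_{\lambda_n}$ with $h=v_n$, bounding the singular term through $\into v_n^{-\eta}v_n\,dx\le\into \underline{u}^{-\eta}v_n\,dx\le\hat c\|v_n\|$ by Hardy's inequality, and the perturbation through hypothesis H(i) together with the uniform $\Linf$-bound, I would conclude that $\{v_n\}_{n\ge1}$ is bounded in $\Wpzero{p}$. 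Then the nonlinear regularity theory, namely Lieberman \cite{Lieberman-1991} for the perturbation and Giacomoni-Schindler-Tak\'{a}\v{c} \cite{Giacomoni-Schindler-Takac-2007} for the singular term and the behavior up to $\partial\Omega$, yields $\alpha\in(0,1)$ and $C>0$ with $\|v_n\|_{C^{1,\alpha}_0(\close)}\le C$ for all $n$; the compact embedding $C^{1,\alpha}_0(\close)\hookrightarrow C^1_0(\close)$ and the monotonicity of $\{v_n\}$ then force the whole sequence to converge, $v_n\to\tilde u$ in $C^1_0(\close)$.

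To finish I would identify $\tilde u$ with $u^*_\lambda$. Since $\nabla v_n\to\nabla\tilde u$ uniformly, I can pass to the limit in $\lan A_p(v_n),h\ran+\lan A_q(v_n),h\ran=\into[v_n^{-\eta}+\lambda_n f(x,v_n)]h\,dx$: the two operator terms converge by uniform convergence of the gradients, the singular term by dominated convergence (majorant $\underline{u}^{-\eta}\in\Lp{1}$, controlled via Hardy) and the perturbation by continuity of $f$, the $\Linf$-bound, and $\lambda_n\to\lambda$. Because $\tilde u\ge\underline{u}>0$, this shows $\tilde u\in\mathcal{S}_\lambda$. Since $v_n\le u^*_\lambda$ for every $n$ gives $\tilde u\le u^*_\lambda$, while minimality of $u^*_\lambda$ gives $u^*_\lambda\le\tilde u$, I conclude $\tilde u=u^*_\lambda$, so $u^*_{\lambda_n}\to u^*_\lambda$ in $C^1_0(\close)$, which is the claimed left continuity.

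The step I expect to be the main obstacle is the uniform $C^{1,\alpha}$ estimate, since the singular term $v_n^{-\eta}$ blows up near $\partial\Omega$ and must be controlled uniformly in $n$. The bound $\underline{u}\le v_n$ is decisive: it dominates $v_n^{-\eta}$ by the fixed function $\underline{u}^{-\eta}$, whose boundary profile is that of $\underline{u}\in\interior$, and this is precisely the structure under which the cited global regularity produces an $n$-independent constant $C$.
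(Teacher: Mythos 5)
Your proposal is correct and follows essentially the same route as the paper: part (a) is exactly the paper's argument (Proposition \ref{prop_10} applied to $u^*_\lambda$, then minimality $u^*_\mu\le u_\mu$ and the cone inclusion), and part (b) mirrors the paper's scheme of monotonicity from (a), uniform bounds plus nonlinear regularity giving relative compactness in $C^1_0(\close)$, and identification of the limit with $u^*_\lambda$. Your version of (b) is in fact slightly more complete than the paper's terse write-up, since you explicitly pass to the limit in the weak formulation to show $\tilde u\in\mathcal{S}_\lambda$ (which the paper's pointwise contradiction argument tacitly requires before invoking minimality), but this is a refinement of the same proof rather than a different one.
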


\begin{proof}
    (a) Let $0<\mu<\lambda\leq \lambda^*$. According to Proposition \ref{prop_5} we can find $u_\mu\in\mathcal{S}_\mu\subseteq \interior$ such that $u^*_\lambda-u_\mu\in\interior$. Since $u^*_\lambda\leq u_\mu$ we obtain the desired conclusion.
    
    (b) Suppose that $\lambda_n \to \lambda^- \leq \lambda^*$. Then $\{u_n^*\}_{n\geq 1}:=\{u^*_{\lambda_n}\}_{n\geq 1}\subseteq \interior$ is increasing and
    \begin{align}\label{52}
	\underline{u} \leq u_n^* \leq u^*_{\lambda^*}\quad\text{for all }n\in\N.
    \end{align}
    From \eqref{52} and the nonlinear regularity theory of Lieberman \cite{Lieberman-1991} we have that $\{u^*_n\}_{n\geq 1}\subseteq C^1_0(\close)$ is relatively compact and so
    \begin{align}\label{53}
	u_n^*\to \tilde{u}^*_\lambda \quad\text{in }C^1_0(\close).
    \end{align}
    
    If $\tilde{u}^*_\lambda\neq u^*_\lambda$, then we can find $z_0\in\Omega$ such that
    \begin{align*}
	u^*_\lambda(z_0)<\tilde{u}^*_\lambda(z_0).
    \end{align*}
    From \eqref{53} we then derive
    \begin{align*}
	u^*_\lambda(z_0)<u_n^*(z_0)\quad\text{for all }n \geq n_0,
    \end{align*}
    which contradicts (a). So, $\tilde{u}^*_\lambda=u^*_\lambda$ and we conclude the left continuity of $\lambda\to u^*_\lambda$.
\end{proof}

Summarizing our findings in this section, we can state the following theorem.

\begin{theorem}
    If hypotheses H hold and $\lambda\in\mathcal{L}=(0,\lambda^*]$, then problem \eqref{problem} admits a smallest positive solution $u^*_\lambda\in\mathcal{S}_\lambda \subseteq \interior$ and the map $\lambda\to u^*_\lambda$ from $\mathcal{L}=(0,\lambda^*]$ into $C^1_0(\close)$ is
    \begin{enumerate}
	\item[(a)]
	    strictly increasing;
	\item[(b)]
	    left continuous.
    \end{enumerate}
\end{theorem}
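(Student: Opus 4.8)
The plan is to recognize that this theorem merely collects the two propositions established earlier in this section, so that its proof reduces to assembling them; no fresh argument is needed. Concretely, I would record that the existence of a smallest element $u^*_\lambda \in \mathcal{S}_\lambda \subseteq \interior$ with $u^*_\lambda \le u$ for all $u \in \mathcal{S}_\lambda$ is precisely the content of the first proposition of the section, while assertions (a) and (b) are the two parts of the second proposition, and then conclude.

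To keep the dependencies transparent I would briefly recall the mechanism behind each ingredient. The existence of $u^*_\lambda$ rests on the downward directedness of $\mathcal{S}_\lambda$ (Proposition 18 of Papageorgiou-\Radulescu-\Repovs \cite{Papageorgiou-Radulescu-Repovs-2020}); via Lemma 3.10 of Hu-Papageorgiou \cite{Hu-Papageorgiou-1997} this yields a decreasing sequence $\{u_n\}_{n\ge 1} \subseteq \mathcal{S}_\lambda$ with $\underline{u} \le u_n \le u_1$ and $\inf_{n\ge 1} u_n = \inf \mathcal{S}_\lambda$, where the lower bound uses Proposition \ref{prop_5}. The order bounds give boundedness in $\Wpzero{p}$, and the $(\Ss)_+$-property of Proposition \ref{prop_1} allows one to pass to a strong limit $u^*_\lambda$, which by Proposition \ref{prop_2} lies in $\mathcal{S}_\lambda \subseteq \interior$ and equals $\inf \mathcal{S}_\lambda$.

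The strict monotonicity (a) follows from the strong comparison principle: for $0 < \mu < \lambda \le \lambda^*$, Proposition \ref{prop_10} supplies some $u_\mu \in \mathcal{S}_\mu$ with $u^*_\lambda - u_\mu \in \interior$, and the minimality $u^*_\mu \le u_\mu$ upgrades this to $u^*_\lambda - u^*_\mu \in \interior$. For the left continuity (b), I would take $\lambda_n \nearrow \lambda$, note that $\{u^*_{\lambda_n}\}_{n\ge 1}$ is increasing and order-bounded above by $u^*_{\lambda^*}$, invoke the Lieberman regularity theory \cite{Lieberman-1991} to obtain relative compactness in $C^1_0(\close)$ and hence convergence $u^*_{\lambda_n} \to \tilde{u}^*_\lambda$ in $C^1_0(\close)$, and finally exclude $\tilde{u}^*_\lambda \neq u^*_\lambda$ by the strict monotonicity of (a), which forces the limit to coincide with $u^*_\lambda$.

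Since every component is already in hand, there is no genuine obstacle here. The only point demanding care is the order of assembly: part (a) is invoked in the proof of (b), so monotonicity must be settled before left continuity, exactly as carried out in the preceding proposition. The substantive difficulties, namely the downward directedness of $\mathcal{S}_\lambda$, the $(\Ss)_+$ passage to the strong limit, and the application of the strong comparison principle of Papageorgiou-\Radulescu-\Repovs, have all been discharged earlier and here need only be cited.
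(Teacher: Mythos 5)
Your proposal is correct and matches the paper exactly: the paper gives no separate proof of this theorem, introducing it with ``Summarizing our findings in this section,'' so it is precisely the assembly of the section's two propositions (existence of the minimal solution via downward directedness, Hu--Papageorgiou, and the $(\Ss)_+$ limit passage; then monotonicity via Proposition \ref{prop_10} and left continuity via Lieberman regularity) that you describe. Your recap is even slightly cleaner than the paper's, since you cite Proposition \ref{prop_10} and use the minimality inequality $u^*_\mu \leq u_\mu$ where the paper's proof of part (a) contains apparent typos.
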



\end{document}